\newtheorem{theorem}{Theorem}[section]
\newtheorem{proposition}[theorem]{Proposition}
\newtheorem{lemma}[theorem]{Lemma} 
\newtheorem{corollary}[theorem]{Corollary}
\newcommand{\nn}{{\mathbb N}}
\newcommand{\zz}{{\mathbb Z}}
\newcommand{\rr}{{\mathbb R}}
\newcommand{\ttt}{{\mathbb T}}
\newcommand{\semi}{{\rtimes}}
\newcommand{\link}{{\mathrm{Lk}}}
\newcommand{\downw}{{\downarrow}}
\title{Uncountably many quasi-isometry classes of groups of type $FP$} 
\author{Robert P. Kropholler \and 
Ian J. Leary\thanks{Partially supported by a Research Fellowship 
from the Leverhulme Trust.}  \and Ignat Soroko\thanks{Partially 
supported by NSF Grants~DMS-1107452, 1107263, 1107367 and 
`RNMS:Geometric Structures and Representation Varieties' (the GEAR 
Network).} 
}
\date{\today}
\begin{document} 

\maketitle

\begin{abstract} 
In~\cite{ufp} one of the authors constructed uncountable families
of groups of type $FP$ and of $n$-dimensional 
Poincar\'e duality groups for each $n\geq 4$.  We show that 
the groups constructed in~\cite{ufp} 
comprise uncountably many quasi-isometry classes.  We deduce 
that for each $n\geq 4$ there are uncountably many quasi-isometry
classes of acyclic $n$-manifolds admitting free cocompact 
properly discontinuous discrete group actions.  
\end{abstract}

\section{Introduction} 

\footnotetext{This work was started at MSRI, Berkeley (during 
the program {\sl Geometric Group Theory}), where 
research is supported by the National Science Foundation under
Grant No.~DMS-1440140.  It was completed at INI, Cambridge, 
during the programme {\sl Nonpositive curvature, group actions
and cohomology}, where research is supported by EPSRC grant EP/K032208/1.
The authors thank both MSRI~and~INI for their support and hospitality.}  
Throughout this article, the phrase `continuously many' will be used
to describe sets having the cardinality of the real numbers.
In~\cite{ufp} one of the authors exhibited continuously many
isomorphism types of groups of type $FP$, extending the work 
of Bestvina and Brady~\cite{bb}, who constructed the first examples 
of groups of type $FP$ that are not finitely presented.  We extend 
these results still further, by showing that the groups constructed 
in~\cite{ufp} fall into continuously many quasi-isometry classes.  

Bestvina-Brady associate a group $BB_L$ to each finite flag complex 
in such a way that the homological properties of the group $BB_L$ are 
controlled by those of the flag complex~$L$.  In particular, in the 
case when $L$ is acyclic but not contractible, $BB_L$ is type~$FP$ but
not finitely presented.  In~\cite{ufp}, a group 
$G_L(S)$ is associated to each connected finite flag
complex $L$ and each set $S\subseteq \zz$ in such a way that the 
homological properties of $G_L(S)$ are controlled by those of 
$L$ and its universal cover, $\widetilde L$.  In the case when 
$L$ and $\widetilde L$ are both acyclic, each $G_L(S)$ is 
type $FP$.  The construction of $G_L(S)$ generalizes that of 
$BB_L$, and in particular $G_L(\zz)$ is $BB_L$.  
Our first main theorem is as follows.  

\begin{theorem} \label{thm:main}
For each fixed finite connected flag complex $L$ that is not
simply-connected, there are continuously many quasi-isometry 
classes of groups~$G_L(S)$.  
\end{theorem}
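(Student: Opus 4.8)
The plan is a counting argument. Since $L$ is connected, each $G_L(S)$ is finitely generated --- it is generated by (the images of) the Bestvina--Brady generators of $BB_L$ --- so its quasi-isometry type is well-defined. It therefore suffices to prove the following: \emph{for each finitely generated group $H$ there are only countably many $S\subseteq\zz$ with $G_L(S)$ quasi-isometric to $H$.} Granting this, the map $S\mapsto[G_L(S)]$ from the set $2^{\zz}$, of cardinality $2^{\aleph_0}$, to the set of quasi-isometry classes has countable fibres, so its image has cardinality at least $2^{\aleph_0}$; since there are at most $2^{\aleph_0}$ quasi-isometry classes of finitely generated groups, this image has cardinality exactly that of $\rr$, which is the assertion of the theorem.

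To encode $S$ geometrically I would use the Bestvina--Brady Morse structure underlying the construction of \cite{ufp}. The group $G_L(S)$ can be described so that its coarse geometry reflects that of a complex $X_L(S)$ equipped with a height function $h\colon X_L(S)\to\rr$, where (roughly) the ascending and descending links of the vertices at an integer level $h^{-1}(n)$ are copies of $L$ when $n\in S$ and copies of $\widetilde L$ when $n\notin S$, or the reverse. Because $L$ is not simply connected these two kinds of link have different fundamental groups --- one is $\pi_1(L)\neq1$, the other is trivial --- and here the hypothesis on $L$ is essential: were $L$ simply connected we would have $\widetilde L=L$ and every $G_L(S)$ would equal $BB_L$. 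Thus the characteristic function of $S$ is recorded by the sequence of ``link types'' met along the height axis, and the problem is to recover this sequence, up to countably many possibilities, from the coarse geometry of $G_L(S)$ alone.

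The crux --- and the step I expect to be the main obstacle --- is to show that this height structure is \emph{coarsely canonical}: any quasi-isometry $\varphi\colon G_L(S)\to G_L(S')$ must coarsely respect the level decomposition, in that there is an affine map $\alpha\colon n\mapsto\pm n+c$ of $\zz$ with $\varphi$ carrying a neighbourhood of $h^{-1}(n)$ into a uniformly bounded neighbourhood of $(h')^{-1}(\alpha(n))$. A priori $\varphi$ need see nothing of $h$, so one must characterise the levels, and their linear order, purely coarsely. I would attempt this by locating, inside the coarse geometry of $G_L(S)$, the maximal coarse subspaces modelled on the Bestvina--Brady complex of $L$ and those modelled on that of $\widetilde L$, together with the pattern in which consecutive such pieces are glued, and using the facts that $L$ is a single fixed finite complex and that Bestvina--Brady Morse theory controls the transitions, so that only finitely many local pictures occur and $G_L(S)$ cannot be sliced into such pieces in any essentially different way. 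One should expect to lose control of a bounded number of levels near each end of the height axis, and to need a separate affine map at each end; neither affects the count.

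Granting coarse canonicity, $\varphi$ must match the ``link type'' at level $n$ with that at level $\alpha(n)$; since this invariant detects membership in $S$, the characteristic functions of $S$ and of $\alpha^{-1}(S')$ agree outside a finite set. Now fix a finitely generated group $H$ and, if one exists, some $S_0$ with $G_L(S_0)$ quasi-isometric to $H$; every $S$ with $G_L(S)$ quasi-isometric to $H$ is then obtained from $S_0$ by applying one of the countably many affine maps $n\mapsto\pm n+c$ (independently near the two ends) and modifying the result on a finite set, and there are only countably many such $S$. This proves the displayed statement, and with it the theorem.
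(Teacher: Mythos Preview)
Your counting reduction is fine, but the crux step is not merely hard --- as you have set it up, it does not quite make sense. The complex $X_L^{(S)}$ carries the height function and the link dichotomy you describe, but $G_L(S)$ does \emph{not} act cocompactly on it: the quotient is the infinite cyclic cover $\widetilde\ttt_L$, which has one vertex at every integer height. The Cayley graph for the natural generators sits inside the $0$-level set of $X_L^{(S)}$, and it is this single level set, not the whole of $X_L^{(S)}$, that models $G_L(S)$ up to quasi-isometry. There is therefore no ``height axis'' inside the coarse geometry of $G_L(S)$ and no decomposition into levels for a quasi-isometry to respect or fail to respect. The dependence on $S$ is visible from the $0$-level set, but it is encoded homotopically --- through which loops in the Cayley graph can be filled by discs of various sizes --- rather than through any spatial stratification. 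Your proposed mechanism (``locate maximal coarse subspaces modelled on \ldots'') has no evident content once one realises that the entire group lives at a single height; and even setting this aside, you have only announced a rigidity theorem, not proved one.

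The paper's argument avoids any rigidity claim and does not attempt to recover $S$ in full. It restricts to a continuum-sized subfamily $S(F)=\{0\}\cup\{C^{2^n}:n\in F\}$ for $F\subseteq\nn$ and a suitably large constant $C$, and computes Bowditch's taut loop length spectrum $H(\Gamma(S(F)))$ of the Cayley graph. The long-cycle relators for $n\in S(F)$ force taut loops of length roughly $|n|$; a CAT(0) distance estimate (Lemma~\ref{lem:cat0}, applied to the branched covering $X_L^{(S)}\to X_L^{(T)}$ and the embedding of $\Gamma(S)$ in the $0$-level set) bounds below the word length of any nontrivial element of $\ker(G_L(S)\to G_L(T))$ by a constant times $\min\{|m|:m\in T-S\}$, which rules out taut loops of intermediate lengths. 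The rapid growth of $C^{2^n}$ then forces $F$ and $F'$ to have finite symmetric difference whenever $\Gamma(S(F))$ and $\Gamma(S(F'))$ are quasi-isometric, giving continuously many classes.
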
 

The invariant that we rely on to distinguish the groups $G_L(S)$ 
is the invariant that was introduced by Bowditch~\cite{bhb} in 
his construction of continuously many quasi-isometry classes of 
2-generator groups.  (Grigorchuk's construction of such a family,   
using growth rate to distinguish the groups~\cite{grig}, is no 
help to us because the groups $G_L(S)$ all have exponential growth.)  
Our account of Bowditch's invariant differs from his and may 
be of independent interest.  

To a graph $\Gamma$ Bowditch associates a 
set $H(\Gamma)$ of natural numbers, consisting of the lengths 
of loops in $\Gamma$ that are {\sl taut} in the sense that they 
are not consequences of shorter loops.  He describes the 
relationship between $H(\Gamma)$ and $H(\Gamma')$ in the 
case when $\Gamma$~and~$\Gamma'$ are 
quasi-isometric.  When $\Gamma$ is the Cayley graph 
associated to a group presentation satisfying the $C'(1/6)$ 
small cancellation condition, the set $H(\Gamma)$ is equal 
to the set of lengths of the relators of the presentation.  

Our proof involves estimating the set $H(\Gamma(S))$, where $\Gamma(S)$ is
the Cayley graph associated to the natural generating set for
$G_L(S)$.  The natural presentation for $G_L(S)$ 
contains relators whose lengths are parametrized by the absolute values
of the members of $S$, but it also contains many relators of length~3,
and does not satisfy the $C'(1/6)$ condition.  To apply Bowditch's 
technique we need a lower bound for the word lengths of elements in 
the kernel of the map $G_L(S)\rightarrow G_L(T)$ for $S\subseteq T$, 
in terms of $T-S$.  The Cayley graph $\Gamma(S)$ embeds naturally in 
a CAT(0) cubical complex.  Our 
lower bound on word length uses this embedding and an easy lemma 
concerning maps between CAT(0) spaces, which will be proved in 
Section~\ref{sec:three}.  In the statement, the 
singular set for a map consists of all points at which it is not
a local isometry.  

\begin{lemma} \label{lem:cat0}
Let $f:X\rightarrow Y$ be a continuous map of CAT(0) 
metric spaces, and suppose that $x\neq x'$ but $f(x)=f(x')$.  
Then the distance $d_X(x,x')$ is at least the sum of the 
distances from $x$~and~$x'$ to the singular set for $f$.  
\end{lemma}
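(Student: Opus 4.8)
The plan is to argue by contradiction, the key input being the standard fact that in a CAT(0) space every local geodesic defined on a compact interval is a (global) geodesic. Write $\Sigma$ for the singular set of $f$ and put $r=d_X(x,\Sigma)$, $r'=d_X(x',\Sigma)$, $D=d_X(x,x')$. First I would dispose of the degenerate cases: if $\Sigma=\emptyset$ the hypothesis can never be met (by the argument below $f$ would then send every geodesic to a geodesic and hence be injective), so one adopts the convention $d_X(x,\emptyset)=+\infty$; and if $x\in\Sigma$ or $x'\in\Sigma$ then one of $r,r'$ vanishes while the other is at most $D$, so the inequality is immediate. Hence I may assume $r,r'>0$ and suppose, for contradiction, that $D<r+r'$.

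Since $X$ is CAT(0) it is uniquely geodesic; let $\gamma\colon[0,D]\to X$ be the unit-speed geodesic from $x$ to $x'$. A point $\gamma(t)$ lies in the open ball $B(x,r)$ precisely when $t<r$ and in $B(x',r')$ precisely when $D-t<r'$; since $D<r+r'$ yields $D-r'<r$, these two conditions together cover the whole of $[0,D]$. Both balls are disjoint from $\Sigma$, so $f$ is a local isometry at every point of the image of $\gamma$, the endpoints $x,x'$ included (this is where $r,r'>0$ is used).

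Next I would check that $f\circ\gamma\colon[0,D]\to Y$ is a local geodesic. Given $t\in[0,D]$, since $f$ is a local isometry at $\gamma(t)$ there is a neighborhood $U$ of $\gamma(t)$ on which $f$ preserves distances; for $s,s'$ sufficiently close to $t$ we have $\gamma(s),\gamma(s')\in U$, hence $d_Y(f\gamma(s),f\gamma(s'))=d_X(\gamma(s),\gamma(s'))=|s-s'|$, so $f\circ\gamma$ restricts to a unit-speed geodesic near each parameter value. By the fact quoted above, $f\circ\gamma$ is therefore a geodesic, of length $D$, joining $f(x)$ to $f(x')$; in particular $d_Y(f(x),f(x'))=D$. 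But $f(x)=f(x')$, so $D=0$, contradicting $x\neq x'$. This contradiction shows $D\geq r+r'$, as required.

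I do not anticipate a serious obstacle here: the proof is essentially the two observations above. The only points that need a little care are the elementary covering argument that forces the entire geodesic $\gamma$ to avoid $\Sigma$ — so that $f$ is genuinely a local isometry at the endpoints and not merely on the interior — and a precise citation for the claim that local geodesics in CAT(0) spaces are geodesics (e.g.\ Bridson--Haefliger, Proposition II.1.4).
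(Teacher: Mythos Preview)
Your argument is correct and follows essentially the same route as the paper's proof: both show that if the geodesic from $x$ to $x'$ avoids the singular set then $f\circ\gamma$ is a local geodesic in $Y$, hence a genuine geodesic by the CAT(0) property, forcing $d_X(x,x')=0$. The paper phrases this contrapositively (the geodesic must hit $\Sigma$ at some point $\gamma(t_0)$, whence $d_X(x,\Sigma)+d_X(x',\Sigma)\leq t_0+(D-t_0)=D$), while you phrase it directly by assuming $D<r+r'$ and using the open-ball covering to keep $\gamma$ away from $\Sigma$; these are equivalent, and your extra care with the degenerate cases and the endpoints is a welcome addition of detail rather than a change of method.
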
 

The so-called `Davis trick'~\cite{davis,davbook} allows one to embed
groups of type~$FP$ as retracts of Poincar\'e duality groups, and
enabled the construction of continuously many isomorphism types of
Poincar\'e dualtiy groups~\cite[Thm.~18.1]{ufp}.  In the final section
we strengthen this result in two ways.  

\begin{corollary}\label{cor:manypdn}
For each $n\geq 4$ there are continuously many quasi-isometry 
classes of non-finitely presented $n$-dimensional Poincar\'e 
duality groups.  
\end{corollary}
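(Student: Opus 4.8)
The plan is to combine Theorem~\ref{thm:main} with the Davis trick, using the retraction to transport Bowditch's invariant from the groups $G_L(S)$ to the Poincar\'e duality groups built from them. Fix a finite flag complex $L$ that is acyclic, has acyclic universal cover, and is not simply-connected (such $L$ exist; see~\cite{ufp}); then each $G_L(S)$ is of type $FP$ but not finitely presented, and Theorem~\ref{thm:main} applies. For $n\geq 4$ and each $S\subseteq\zz$, the Davis trick of~\cite[Thm.~18.1]{ufp} (see also~\cite{davis,davbook}) produces an $n$-dimensional Poincar\'e duality group $\Gamma_L^n(S)$ together with homomorphisms $i\colon G_L(S)\to\Gamma_L^n(S)$ and $r\colon\Gamma_L^n(S)\to G_L(S)$ with $r\circ i=\mathrm{id}$; moreover $\Gamma_L^n(S)$ is not finitely presented. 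The crucial point, to be extracted from the construction, is that $\Gamma_L^n(S)$ admits a finite generating set and finite ``partial presentation'' in which the only relators of unbounded length are those inherited from the natural presentation of $G_L(S)$: all the extra generators and relators introduced by the reflection-group construction have length bounded by a constant $D=D(n,L)$ independent of~$S$.

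With this in hand I would run the argument of Theorem~\ref{thm:main} for the groups $\Gamma_L^n(S)$ in place of $G_L(S)$. Let $\Delta(S)$ be the Cayley graph of $\Gamma_L^n(S)$ for a suitable finite generating set. Since $G_L(S)$ is a retract of $\Gamma_L^n(S)$, the inclusion $i$ is a quasi-isometric embedding and, equally importantly, the retraction $r$ is Lipschitz: there is a constant $C=C(n,L)$ so that $r$ multiplies word lengths by at most~$C$. Consequently, if a loop $\gamma$ in $\Gamma(S)$, regarded via $i$ as a loop in $\Delta(S)$, is a consequence of loops of length $<\ell$ in $\Delta(S)$, then applying $r$ shows that $\gamma$ is a consequence of loops of length $<C\ell$ in $\Gamma(S)$ (the $r$-images of the bounded-length Davis relators contribute only loops of length $\leq CD$). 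Combining this with the lower bound of Section~\ref{sec:three} and Lemma~\ref{lem:cat0} on the word lengths of elements of $\ker(G_L(S)\to G_L(T))$, one deduces that Bowditch's set $H(\Delta(S))$ records $S$ just as faithfully as $H(\Gamma(S))$ does, up to the bounded and multiplicative distortions that Bowditch's comparison theorem is designed to absorb.

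Finally, suppose $\Gamma_L^n(S)$ and $\Gamma_L^n(T)$ were quasi-isometric. Applying Bowditch's theorem comparing $H$ of quasi-isometric graphs to $\Delta(S)$ and $\Delta(T)$, together with the previous paragraph, yields the same rigid relationship between $S$ and $T$ that was obtained in the proof of Theorem~\ref{thm:main}. The same cardinality argument then produces continuously many subsets $S\subseteq\zz$ for which the groups $\Gamma_L^n(S)$ are pairwise non-quasi-isometric; since each is an $n$-dimensional Poincar\'e duality group that is not finitely presented, the corollary follows.

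\textbf{Main obstacle.} The step requiring care is verifying that the Davis trick can be set up so that its contribution to the presentation of $\Gamma_L^n(S)$ is uniformly bounded in~$S$, and that pushing loops down along $r$ interacts correctly with the notion of a loop being a consequence of shorter loops, so that Bowditch's quantitative comparison still applies. Granting this, the remainder is a transcription of the proof of Theorem~\ref{thm:main} through the retraction, and the cardinality extraction is unchanged.
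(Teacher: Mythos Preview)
Your retraction idea is correct and does real work, but it only handles half of what is needed, and the missing half is precisely where the paper expends its effort.

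The retraction $r\colon J(S)\to G_L(S)$ is $1$-Lipschitz on word length (it sends the Coxeter generators to~$1$), and your argument that a filling of $i(\gamma)$ in $\Delta(S)_\ell$ pushes down to a filling of $\gamma$ in $\Gamma(S)_\ell$ is valid. This establishes that for $n\in F$ the interval $[\alpha C^{2^n},\beta C^{2^n}]$ meets $H(\Delta(S(F)))$: taut loops of $\Gamma(S(F))$ stay taut in $\Delta(S(F))$. What it does \emph{not} establish is the converse containment, namely that $H(\Delta(S(F)))$ avoids $[\alpha C^{2^n},\beta C^{2^n}]$ for $n\notin F$. A taut loop $\gamma$ in $\Delta(S(F))$ may represent an element of the Coxeter kernel $W_{K(S)}=\ker(J(S)\to G_L(S))$; then $r(\gamma)$ is the trivial loop and the retraction tells you nothing. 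Without ruling out such loops, $H(\Delta(S(F)))$ could in principle be the same for every $F$, and Bowditch's comparison would be vacuous.

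This is exactly the content of the paper's Theorem~\ref{thm:semiker}, which is proved \emph{algebraically} using Tits' solution to the word problem in right-angled Coxeter groups: a non-trivial element $w\in\ker(J(S)\to J(T))$ lying in $W_{K(S)}$, when written as a product of conjugated Coxeter generators $(g_iv_ig_i^{-1})$, must admit a Tits move in $W_{K(T)}$ that fails in $W_{K(S)}$, and comparing the conjugating elements $g_i,g_j$ extracts a short non-trivial element of $\ker(G_L(S)\to G_L(T))$. Your ``Main obstacle'' paragraph is right that the Davis relators have uniformly bounded length, but that alone does not control the taut spectrum; the substantive missing step is this Tits-move argument, not a bookkeeping check.
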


\begin{corollary}\label{cor:manymani} 
For each $n\geq 4$ there is a closed aspherical $n$-manifold 
admitting continuously many quasi-isometry classes of regular 
acyclic covers.  
\end{corollary}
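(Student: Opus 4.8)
The plan is to realise all of the groups $G_L(S)$, for one suitably chosen complex $L$, simultaneously as deck transformation groups of regular covers of a single closed aspherical $n$-manifold $M$, in such a way that every one of these covers is acyclic; Theorem~\ref{thm:main} then forces the covers into continuously many quasi-isometry classes. First fix, as in \cite{ufp}, a finite connected flag complex $L$ that is acyclic, has acyclic universal cover $\widetilde L$, and is not simply connected. Under these hypotheses every $G_L(S)$, $S\subseteq\zz$, is of type $FP$, and --- since $L$ is not simply connected --- Theorem~\ref{thm:main} says that the groups $G_L(S)$ fall into continuously many quasi-isometry classes.

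Next, apply the Davis trick as in \cite[\S 18]{ufp} and the proof of Corollary~\ref{cor:manypdn}, so as to obtain, for each prescribed $n\geq 4$, a closed aspherical $n$-manifold $M$ together with a surjection $\rho\colon\pi_1(M)\twoheadrightarrow H$ onto $H:=G_L(\emptyset)$ whose kernel $K$ is an acyclic group; equivalently, the cover of $M$ corresponding to $K$ is an acyclic open $n$-manifold carrying a free, cocompact, properly discontinuous $H$-action. (To make $M$ a genuine closed manifold, and not merely a Poincar\'e duality group, one applies the reflection trick to a finite aspherical complex $Y$ equipped with a surjection $\pi_1(Y)\twoheadrightarrow H$ with acyclic kernel; producing such a $Y$ from the construction of \cite{ufp} is the first subtlety.) For each $S\subseteq\zz$ let $q_S\colon H=G_L(\emptyset)\twoheadrightarrow G_L(S)$ be the natural quotient map, put $J(S):=\ker q_S\trianglelefteq H$, and set $N(S):=\rho^{-1}(J(S))\trianglelefteq\pi_1(M)$; then $N(S)$ is an extension of $J(S)$ by $K$, and $\pi_1(M)/N(S)\cong G_L(S)$. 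Write $p_S\colon M_S\to M$ for the associated regular cover.

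Two things then need checking. The covers are quasi-isometrically distinct: with the metric pulled back along $p_S$, the space $M_S$ is a proper geodesic space on which its deck group $\pi_1(M)/N(S)\cong G_L(S)$ acts by isometries, cocompactly and properly discontinuously, so --- $G_L(S)$ being finitely generated --- the Milnor--\v{S}varc lemma shows $M_S$ is quasi-isometric to $G_L(S)$, whence by Theorem~\ref{thm:main} there are continuously many quasi-isometry classes among the $M_S$. And the covers are acyclic: being a cover of the aspherical manifold $M$, each $M_S$ is aspherical, hence acyclic precisely when $N(S)$ is an acyclic group; since $K$ is acyclic, the Lyndon--Hochschild--Serre spectral sequence of $1\to K\to N(S)\to J(S)\to 1$ collapses and gives $H_*(N(S);\zz)\cong H_*(J(S);\zz)$, so everything reduces to showing that $J(S)=\ker\bigl(G_L(\emptyset)\to G_L(S)\bigr)$ is an acyclic group. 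Here one uses the internal structure of the groups of \cite{ufp}: adjoining the relator indexed by $s\in S$ amounts to collapsing a `branch' of the cube complex on which $G_L(\emptyset)$ acts, and $J(S)$ is correspondingly a free product of $G_L(\emptyset)$-conjugates of subgroups modelled on $L$ and on $\widetilde L$, each acyclic because $L$ and $\widetilde L$ are acyclic; an arbitrary free product of acyclic groups is acyclic since $\widetilde H_*(A\ast B;\zz)\cong\widetilde H_*(A;\zz)\oplus\widetilde H_*(B;\zz)$. Thus $J(S)$, hence $N(S)$, is acyclic, so $p_S\colon M_S\to M$ is a regular acyclic cover. Letting $S$ run over the continuum-sized family from Theorem~\ref{thm:main} completes the proof.

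The main obstacle is the homological analysis of the kernels $J(S)=\ker(G_L(\emptyset)\to G_L(S))$: one must unwind enough of the construction of \cite{ufp} to identify these normal subgroups, or at least compute their integral homology, and see that the hypothesis that $L$ and $\widetilde L$ are acyclic makes $J(S)$ acyclic. Secondarily, one must arrange the Davis trick so that its output is genuinely a closed aspherical manifold --- whose fundamental group is then automatically finitely presented --- which is why it is applied to a finite aspherical complex surjecting onto $G_L(\emptyset)$ rather than to $G_L(\emptyset)$ itself. Granted these, the quasi-isometry conclusion is immediate from the Milnor--\v{S}varc lemma and Theorem~\ref{thm:main}.
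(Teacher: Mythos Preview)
Your route is genuinely different from the paper's, and the difference is exactly where the gap lies.  The paper does \emph{not} arrange for the deck groups of the acyclic covers to be the groups $G_L(S)$.  Instead, the covers $M(S)$ that arise from the Davis trick in \cite[\S18]{ufp} have deck groups (finite-index subgroups of) the semidirect products $J(S)=W_{K(S)}\rtimes G_L(S)$, and the whole of Section~6 of the paper --- in particular Theorem~\ref{thm:semiker} and Corollary~\ref{cor:jess} --- is devoted to transferring the taut-loop analysis from $G_L(S)$ to $J(S)$ so that the $J(S)$ themselves fall into continuously many quasi-isometry classes.  Schwarz--Milnor then finishes the argument.  Your plan would bypass Section~6 entirely, which is attractive, but it rests on two assertions that you do not prove and that are not supplied by the references you cite.

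First, the Davis trick as applied in \cite[\S18]{ufp} does not yield a surjection $\rho\colon\pi_1(M)\twoheadrightarrow G_L(\emptyset)$ with acyclic kernel.  What it yields is $\pi_1(M)=J'$, a finite-index torsion-free subgroup of $W_{K(\emptyset)}\rtimes G_L(\emptyset)$; the kernel of the natural map $J'\to G_L(\emptyset)$ is $J'\cap W_{K(\emptyset)}$, a finite-index torsion-free subgroup of an infinite right-angled Coxeter group, and there is no reason for this to be acyclic.  Your suggested fix --- apply the trick instead to a finite aspherical $Y$ with $\pi_1(Y)\twoheadrightarrow G_L(\emptyset)$ having acyclic kernel --- does not help: the Davis construction still inserts a Coxeter group between $\pi_1(M)$ and $\pi_1(Y)$, so the kernel of $\pi_1(M)\to G_L(\emptyset)$ again contains a non-acyclic Coxeter piece.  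Second, the claim that $\ker\bigl(G_L(\emptyset)\to G_L(S)\bigr)$ is a free product of copies of $\pi_1(L)$ (hence acyclic) is plausible but is neither proved here nor stated in \cite{ufp}; you flag it as ``the main obstacle'' and then assume it.  Without establishing both of these points you cannot conclude that the covers with deck group $G_L(S)$ are acyclic, and so the proof does not go through as written.  The paper's approach sidesteps both issues by accepting the more complicated deck groups $J(S)$ --- for which acyclicity of the covers is already in \cite{ufp} --- and paying the price of the semidirect-product analysis in Section~6.
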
 

To establish these results, we study the behaviour of the 
Bowditch length spectrum under some semi-direct product 
constructions that arise when implementing the Davis trick. 
In contrast to the geometric methods used throughout the 
rest of the article, the proof of Theorem~\ref{thm:semiker}, 
which is the main result of the final section, 
is purely algebraic.  It would be interesting to have a 
geometric proof of this theorem and conversely 
to have algebraic proofs of our results concerning $G_L(S)$.  

\section*{Acknowledgements} 

The authors thank Noel Brady and Martin Bridson for their 
interest in this project and for stimulating discussions 
about it.  We also thank them both for having posed the 
question of whether the groups $G_L(S)$ fall into 
continuously many quasi-isometry classes, which 
initiated the project.  Finally, we thank a referee, 
whose careful reading of an earlier version of this 
article led to various improvements.

\section{Background} 

There are various types of Cayley graphs, but we shall need just one
type which we shall call the {\sl simplicial Cayley graph}
$\Gamma(G,S)$ associated to the group $G$ and generating set~$S$.
This is the simplicial graph with vertex set $G$ and edge set the
2-element sets of the form $\{g,gs\}$ for some $s\in S$.  This
definition could be made for any $S\subseteq G$; the fact that $S$
generates $G$ is equivalent to the graph $\Gamma(G,S)$ being
connected.  Any simplicial graph with a free, transitive $G$-action on
its vertex set is isomorphic as a graph with $G$-action to
$\Gamma(G,S)$ for some $S$.  The action of $G$ on the edges of
$\Gamma(G,S)$ is free if and only if $S$ contains no element of order
two.

Next we recall some material from~\cite{bhb} concerning
quasi-isometries and Bowditch's taut loop length spectrum.  Bowditch's
article~\cite{bhb} does not mention homotopies, 2-complexes or the
fundamental group, all of which play crucial roles in our account of
his work.  We believe that some readers will benefit from our 
different but equivalent account.  For this reason we restate some of
his results in our terms, and encourage the interested
reader to try to prove them before consulting~\cite{bhb}.

Each graph $\Gamma$ that we consider will be connected, simplicial, 
and will be viewed as a metric space via the path metric $d_\Gamma$, 
in which each edge has length one.  The induced metric on the vertex 
set of a Cayley graph $\Gamma(G,S)$ is thus the $S$-word length 
metric on the group $G$.  For $k>0$ an integer, recall that a 
function $f:X\rightarrow Y$ between metric spaces is {\sl $k$-Lipschitz} if 
$d_Y(f(x),f(x'))\leq k.d_X(x,x')$ for all $x,x'\in X$.  Following 
Bowditch~\cite{bhb}, we say that graphs $\Gamma$ and $\Lambda$ 
are {\sl $k$-quasi-isometric} if there exist a pair of 
$k$-Lipschitz maps of vertex sets $\phi:V(\Gamma)\rightarrow 
V(\Lambda)$ and $\psi:V(\Lambda)\rightarrow V(\Gamma)$ so that 
$d_\Gamma(x,\psi\circ\phi(x))\leq k$ for each vertex $x$ of $\Gamma$ 
and similarly $d_\Lambda(y,\phi\circ\psi(y))\leq k$ for each 
vertex $y$ of $\Lambda$.  Graphs are {\sl quasi-isometric} if they 
are $k$-quasi-isometric for some integer $k>0$.  

We remark that the above definition is not the standard one; see for
example~\cite[I.8.14]{bh} for the standard definition of a
quasi-isometry between metric spaces.  We leave it as an exercise to
check that graphs $\Gamma$, $\Lambda$ are quasi-isometric as above if and
only if the metric spaces $(\Gamma,d_\Gamma)$ and
$(\Lambda,d_\Lambda)$ are quasi-isometric in the usual sense.

An edge loop of length $l$ in a (simplicial) graph $\Gamma$ is a
sequence $v_0,\ldots,v_l$ of vertices such that $v_0=v_l$ and
$\{v_{i-1},v_{i}\}$ is an edge for $1\leq i\leq l$.  For a graph
$\Gamma$ and an integer constant $l$, let $\Gamma_l$ denote the
2-complex whose 1-skeleton is the geometric realization of~$\Gamma$, 
with one 2-cell attached to each edge loop in $\Gamma$ of length 
strictly less than $l$.  An edge loop of length~$l$ in $\Gamma$ 
is said to be {\sl taut} if it is not null-homotopic
in $\Gamma_l$.  Bowditch's {\sl taut loop length spectrum} $H(\Gamma)$
for the graph $\Gamma$ is the set of lengths of taut loops.

We are interested in the 2-complex $\Gamma_l$ only to define taut 
loops: if $\Gamma'$ is any subcomplex with $\Gamma\subseteq 
\Gamma'\subseteq \Gamma_l$ so that the induced map on fundamental
groups $\pi_1(\Gamma')\rightarrow \pi_1(\Gamma_l)$ is an isomorphism, 
then an edge loop is taut if and only if it is not null-homotopic 
in $\Gamma'$.  

Bowditch defines subsets $H,H'\subseteq \nn$ to be {\sl $k$-related} 
if for all $l\geq k^2+2k+2$, whenever $l\in H$ then there is some $l'\in H'$ 
with $l/k\leq l'\leq lk$ and vice-versa.  He then proves 

\begin{lemma}\label{lemma:bowditch}
If (connected) graphs $\Gamma$ and $\Lambda$ are 
$k$-quasi-isometric, then $H(\Gamma)$ and $H(\Lambda)$ are
$k$-related. 
\end{lemma}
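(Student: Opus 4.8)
The plan is to work directly with the 2-complexes $\Gamma_l$ and $\Lambda_l$ and to convert a $k$-quasi-isometry into a pair of maps of 2-complexes that control null-homotopies. First I would promote the $k$-Lipschitz vertex maps $\phi\colon V(\Gamma)\to V(\Lambda)$ and $\psi\colon V(\Lambda)\to V(\Gamma)$ to maps of graphs: for each edge $\{v,v'\}$ of $\Gamma$, the images $\phi(v),\phi(v')$ are within distance $k$ in $\Lambda$, so I can send the edge across a chosen geodesic edge-path of length $\le k$ in $\Lambda$. This gives a map of graphs $\Phi\colon\Gamma\to\Lambda$ that multiplies lengths of edge loops by at most $k$. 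The point of the definition of $\Gamma_l$ is that it only remembers loops of bounded length, so the next step is to check that $\Phi$ extends over 2-cells: an edge loop of length $<l$ in $\Gamma$ maps to an edge loop of length $<lk$ in $\Lambda$, hence bounds in $\Lambda_{lk}$. Thus for any $m\ge lk$ (in particular once $m$ is large enough), $\Phi$ extends to a map $\Gamma_l\to\Lambda_m$. Symmetrically $\psi$ gives $\Psi\colon\Lambda_m\to\Gamma_{m'}$ for suitable $m'$.

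The second ingredient is that the composites $\Psi\circ\Phi$ and $\Phi\circ\Psi$ are, up to the bounded error $k$, the identity on vertices, and hence are homotopic to inclusions on the relevant 2-complexes. Concretely, for a vertex $v$ of $\Gamma$ we have $d_\Gamma(v,\Psi\Phi(v))\le k$, so choosing geodesics realizing these inequalities gives a homotopy from $\Psi\circ\Phi$ to the inclusion $\Gamma_l\hookrightarrow\Gamma_{m'}$ — provided $m'$ is large enough that all the small squares and bigons swept out by this homotopy (which involve loops of length bounded in terms of $k$) bound 2-cells. This is exactly the role of the threshold $k^2+2k+2$: the loops produced by the homotopy have length at most something like $k+k+k^2+\cdots$, and one needs $l$ and the intermediate thresholds to exceed this so that the homotopy lives in the complexes under consideration. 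I would track the constants carefully here to match Bowditch's bound, but the shape of the argument is independent of the exact value.

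With these maps in hand, the conclusion is a diagram chase on null-homotopy. Suppose $l\in H(\Gamma)$ with $l\ge k^2+2k+2$, witnessed by a taut loop $\gamma$ of length $l$: $\gamma$ is not null-homotopic in $\Gamma_l$. Its image $\Phi(\gamma)$ is an edge loop in $\Lambda$ of length $\ell'$ with $l/k\le \ell'\le lk$ (the lower bound because $\Psi$ is $k$-Lipschitz and $\Psi\Phi(\gamma)$ is freely homotopic to $\gamma$, which is essential in $\Gamma_l$, so $\Phi(\gamma)$ cannot be too short). Now I claim $\Phi(\gamma)$ is essential in $\Lambda_{\ell'}$: if it bounded a 2-cell disc in $\Lambda_{\ell'}$, then applying $\Psi$ and using that $\Psi$ extends suitably would give a null-homotopy of $\Psi\Phi(\gamma)$ in some $\Gamma_{m'}$, and combined with the homotopy $\Psi\circ\Phi\simeq\mathrm{incl}$ this would kill $\gamma$ in $\Gamma_{m'}$; a slightly more careful version, replacing $\Lambda_{\ell'}$ by $\Lambda_l$ and keeping the thresholds aligned via the inequalities, shows $\gamma$ would be null-homotopic in $\Gamma_l$, a contradiction. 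Hence some taut loop length $l'$ of $\Lambda$ satisfies $l'\le \ell'\le lk$, and a symmetric argument starting from $\Phi(\gamma)$'s essential length gives $l'\ge l/k$; running the whole argument with the roles of $\Gamma$ and $\Lambda$ swapped gives the converse direction.

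The main obstacle I anticipate is bookkeeping the thresholds: one must choose the sequence of indices $l< m< m'<\cdots$ on the complexes $\Gamma_\bullet,\Lambda_\bullet$ so that (i) $\Phi$ and $\Psi$ extend over 2-cells, (ii) the homotopies $\Psi\Phi\simeq\mathrm{incl}$ and $\Phi\Psi\simeq\mathrm{incl}$ are supported in those complexes, and (iii) the final contradiction takes place in $\Gamma_l$ and not merely in some larger $\Gamma_{m'}$ — the last point is what forces the explicit bound $l\ge k^2+2k+2$ and is where the argument is genuinely delicate rather than formal. Everything else is a straightforward, if fiddly, exercise in elementary combinatorial homotopy theory of 2-complexes.
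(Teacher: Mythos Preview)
Your proposal matches the approach the paper outlines: the paper does not give a self-contained proof but cites Bowditch and restates his two key lemmas in the language of the complexes $\Gamma_l$, namely that $H(\Gamma)$ is exactly the set of $l$ at which $\pi_1(\Gamma_l)\to\pi_1(\Gamma_{l+1})$ fails to be an isomorphism, and that for $l\ge k^2+2k+2$ one has homotopies $i_{\Gamma,l}\circ\psi\circ\phi\simeq i_{\Gamma,l}$ and $i_{\Lambda,l}\circ\phi\circ\psi\simeq i_{\Lambda,l}$. Your two ingredients are precisely these, and your diagram chase is the intended deduction.

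One imprecision worth flagging: the claim that the \emph{length} $\ell'$ of $\Phi(\gamma)$ satisfies $l/k\le\ell'$ is false as stated, since $\Phi$ may collapse edges. The lower bound you need is not on $\ell'$ but on the index $m$ at which $\Phi(\gamma)$ becomes null-homotopic in $\Lambda_m$. The argument you sketch immediately afterwards is the right one: $\Phi(\gamma)$ cannot die in $\Lambda_{\lfloor l/k\rfloor}$, because $\Psi$ carries $\Lambda_{\lfloor l/k\rfloor}$ into $\Gamma_l$ and the homotopy $\Psi\Phi\simeq i_{\Gamma,l}$ would then kill $\gamma$ in $\Gamma_l$; combined with the fact that $\Phi(\gamma)$ has length $\le lk$ and hence dies in $\Lambda_{lk+1}$, the first auxiliary lemma produces some $l'\in H(\Lambda)\cap[l/k,\,lk]$. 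With that adjustment your outline is correct, and your identification of the threshold bookkeeping as the only genuinely delicate point is accurate.
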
 

In our terms, the lemmas that Bowditch uses to prove the above 
result are as follows.  

\begin{lemma} $H(\Gamma)$ is equal to the set of $l\in \nn$ for which 
the induced map of fundamental groups 
$\pi_1(\Gamma_l)\rightarrow \pi_1(\Gamma_{l+1})$ is not an 
isomorphism.  
\end{lemma}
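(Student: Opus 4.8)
The plan is to use the fact that $\Gamma_{l+1}$ is obtained from $\Gamma_l$ by attaching one $2$-cell along each edge loop of $\Gamma$ of length \emph{exactly} $l$ (these are precisely the loops of length strictly less than $l+1$ but not strictly less than $l$), and then to apply the standard description of how attaching $2$-cells affects the fundamental group.

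First I would fix a basepoint vertex $v$ of the connected graph $\Gamma$, so that $\Gamma_l$ and $\Gamma_{l+1}$ are connected based complexes with common $1$-skeleton $\Gamma$. Since the fundamental group of a CW complex at a vertex is generated by the edge loops lying in its $1$-skeleton, and $\Gamma_l$ and $\Gamma_{l+1}$ share that $1$-skeleton, the inclusion $\Gamma_l\hookrightarrow\Gamma_{l+1}$ induces a \emph{surjection} $\pi_1(\Gamma_l,v)\to\pi_1(\Gamma_{l+1},v)$; hence this map is an isomorphism if and only if it is injective. Next I would identify its kernel: by van Kampen's theorem, $\pi_1(\Gamma_{l+1},v)$ is the quotient of $\pi_1(\Gamma_l,v)$ by the normal closure $N$ of the elements represented by the attaching maps of the new $2$-cells, that is, by the length-$l$ edge loops of $\Gamma$, each joined to $v$ by an auxiliary path to make it based. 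Different choices of auxiliary path replace an element by a conjugate, so $N$ is well defined independently of these choices. The elementary point is then that $N$ is trivial if and only if each of its chosen generators is already trivial in $\pi_1(\Gamma_l,v)$, which is to say if and only if every edge loop of $\Gamma$ of length $l$ is null-homotopic in $\Gamma_l$ (free null-homotopy and based triviality coincide here, the trivial element being alone in its conjugacy class).

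Combining the two observations, the map $\pi_1(\Gamma_l)\to\pi_1(\Gamma_{l+1})$ fails to be an isomorphism exactly when some edge loop of length $l$ is not null-homotopic in $\Gamma_l$, i.e. exactly when there is a taut loop of length $l$, i.e. exactly when $l\in H(\Gamma)$. I do not expect a genuine obstacle; the only points needing care are the basepoint bookkeeping for the attaching maps, handled by the conjugacy-invariance of normal closures, and the observation that surjectivity is automatic because $\Gamma_l$ and $\Gamma_{l+1}$ have the same $1$-skeleton.
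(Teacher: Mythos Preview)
Your argument is correct. Note, however, that the paper does not supply its own proof of this lemma: the authors explicitly restate Bowditch's results in homotopy-theoretic language and ``encourage the interested reader to try to prove them before consulting~\cite{bhb}.'' Your van Kampen computation---observing that $\Gamma_{l+1}$ is $\Gamma_l$ with a $2$-cell glued along each length-$l$ loop, so that $\pi_1(\Gamma_l)\to\pi_1(\Gamma_{l+1})$ is the quotient by the normal closure of precisely those loops---is exactly the intended verification, and the basepoint and conjugacy bookkeeping you flag is handled correctly.
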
 

For any fixed $l$, let $i_{\Gamma,l}$ denote the inclusion of 
$\Gamma$ in the 2-complex $\Gamma_l$.  

\begin{lemma} 
If $\phi:\Gamma\rightarrow \Lambda$ and $\psi:\Lambda\rightarrow
\Gamma$ are $k$-Lipschitz maps whose restrictions to vertex sets 
define a $k$-quasi-isometry between $\Gamma$ and $\Lambda$, 
then for any $l\geq k^2+2k+2$ there are homotopies 
\[ i_{\Gamma,l}\circ \psi\circ \phi\simeq i_{\Gamma,l} 
\quad\hbox{and}\quad 
i_{\Lambda,l}\circ \phi\circ \psi\simeq i_{\Lambda,l}.\] 
\end{lemma}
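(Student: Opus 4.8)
The plan is to construct the homotopy $F\colon\Gamma\times[0,1]\to\Gamma_l$ realizing $i_{\Gamma,l}\simeq i_{\Gamma,l}\circ\psi\circ\phi$ directly, building it over the skeleta of the product CW-structure on $\Gamma\times[0,1]$. The hypotheses are symmetric under interchanging $(\Gamma,\phi)$ with $(\Lambda,\psi)$, so it is enough to produce this first homotopy. Write $g=\psi\circ\phi\colon\Gamma\to\Gamma$; it is $k^2$-Lipschitz, it carries vertices to vertices, and the quasi-inverse condition in the definition of $k$-quasi-isometry gives $d_\Gamma(x,g(x))\le k$ for every vertex $x$. The cells of $\Gamma\times[0,1]$ are the vertices $\{v\}\times\{0,1\}$, the edges $\{v\}\times[0,1]$, $e\times\{0\}$, $e\times\{1\}$, and the squares $e\times[0,1]$, where $v$ and $e$ run through the vertices and edges of $\Gamma$.

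First I would fix, for each vertex $v$, a geodesic edge path $\gamma_v$ in $\Gamma$ from $v$ to $g(v)$; by the displacement bound $\gamma_v$ has length at most $k$. Then I define $F$ on the $1$-skeleton: send $\{v\}\times\{0\}$ to $v$ and $\{v\}\times\{1\}$ to $g(v)$; send $e\times\{0\}$ to $e$ and $e\times\{1\}$ to $g(e)$ (a rectifiable path of length at most $k^2$), each included into $\Gamma_l$; and send $\{v\}\times[0,1]$ to $\gamma_v$. This is continuous, and by construction it restricts to $i_{\Gamma,l}$ on $\Gamma\times\{0\}$ and to $i_{\Gamma,l}\circ g$ on $\Gamma\times\{1\}$, so the whole problem reduces to extending $F$ over the squares.

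The crux is the square $e\times[0,1]$ for $e=\{u,v\}$. Its boundary circle is carried by the partially-defined $F$ to the concatenation of $e$, then $\gamma_v$, then the reverse of $g(e)$, then the reverse of $\gamma_u$: a loop in the $1$-skeleton $\Gamma\subseteq\Gamma_l$ of length at most $1+k+k^2+k=k^2+2k+1$, which is strictly smaller than $l$ since $l\ge k^2+2k+2$. Here is where the numerical hypothesis does its work: every loop in $\Gamma$ of length $<l$ is null-homotopic in $\Gamma_l$, because it is based-homotopic in $\Gamma$ to the reduced edge loop representing its class in the free group $\pi_1(\Gamma)$, that reduced edge loop is no longer than the original loop (lift to the universal covering tree and compare with the tree geodesic), and an edge loop of length $<l$ is either trivial or bounds one of the $2$-cells attached in $\Gamma_l$. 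Hence $F$ extends over every square, giving the homotopy. The second homotopy $i_{\Lambda,l}\circ\phi\circ\psi\simeq i_{\Lambda,l}$ follows by the same argument with the roles of $\Gamma$ and $\Lambda$, and of $\phi$ and $\psi$, exchanged.

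The only genuine obstacle I anticipate is this last step, and within it the careful bookkeeping: checking that the boundary of each square has length at most $k^2+2k+1$ — which is exactly what makes the hypothesis $l\ge k^2+2k+2$ strong enough — and confirming that a loop this short dies in $\Gamma_l$, noting that $\phi$, $\psi$ and hence $g$ need not send edges to edge paths, so the square boundary is only a rectifiable loop rather than a combinatorial one. The rest (the product CW-structure and the skeleton-by-skeleton extension) is routine.
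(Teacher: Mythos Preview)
Your argument is correct. The paper does not actually supply a proof of this lemma: it restates Bowditch's result in the language of 2-complexes and homotopies and explicitly ``encourage[s] the interested reader to try to prove them before consulting~[bhb].'' So there is no proof in the paper to compare against; what you have written is precisely the natural proof in the paper's framework, and the cell-by-cell extension over $\Gamma\times[0,1]$ is the expected strategy.

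Your bookkeeping is right: the boundary of each square $e\times[0,1]$ has length at most $1+k+k^2+k=k^2+2k+1<l$, and your reduction from a rectifiable loop based at a vertex to a combinatorial edge loop of no greater length (lift to the tree, replace by the geodesic between the two vertex endpoints, project) is exactly the point that needs care and you have handled it. One could alternatively sidestep the rectifiable-loop issue by first choosing $\phi$ and $\psi$ to send each edge to an edge path of length at most $k$ (any $k$-Lipschitz map on vertices extends this way), so that $g$ sends edges to edge paths of length at most $k^2$ and the square boundaries are combinatorial from the outset; but your version works as stated.
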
 

Next we review some material from~\cite{bb,ufp}.  A {\sl flag
complex} or {\sl clique complex} is a simplicial complex 
in which every finite set of mutually adjacent vertices 
spans a simplex.  For the remainder of this section, 
$L$ will denote a finite flag complex.  
Let $\ttt$ denote the circle $\rr/\zz$, 
viewed as a CW-complex with one vertex at $0+\zz\in \rr/\zz$ 
and one edge.  For a finite set $V$ let $\ttt^V$ denote
the product $\prod_{v\in V}\ttt_v$, where $\ttt_v$ 
denotes a copy of $\ttt$.  Non-empty subcomplexes of 
$\ttt^V$ are in bijective correspondence with simplicial 
complexes with vertex set contained in $V$.  If $L$ 
is a finite flag complex with vertex set $V$, let $\ttt_L$ 
denote the corresponding subcomplex of $\ttt^V$.  This 
complex is aspherical, and its fundamental group 
is the {\sl right-angled Artin group} $A_L$ associated to 
$L$, with generators corresponding to the vertices 
of $L$, subject only to the relations that $v$~and~$w$ 
commute whenever $\{v,w\}$ is an edge of $L$.  

The universal covering space $X_L$ of $\ttt_L$ has a 
natural cubical structure, and is a CAT(0) cubical 
complex.  The additive group structure in $\ttt=\rr/\zz$
defines a map $l:\ttt^V\rightarrow \ttt$, and hence 
a map $l_L:\ttt_L\rightarrow \ttt$.  Define $\widetilde \ttt_L$ 
to be the regular covering of $\ttt_L$ induced by 
pulling back the universal covering of $\ttt$ along $l_L$.  
The Bestvina-Brady group $BB_L$ is defined to be the 
fundamental group $\pi_1(\widetilde \ttt_L)$, or 
equivalently the kernel of the map $A_L\rightarrow \zz$ 
of fundamental groups induced by $l:\ttt_L\rightarrow \ttt$.  
Bestvina and Brady showed that many properties of $BB_L$ 
are determined by properties of $L$.  In the case when $L$ is  
acyclic but not simply connected, $BB_L$ is type
$FP$ but not finitely presented~\cite{bb}.  

Let $f_L:X_L\rightarrow \rr$ be the map of universal coverings 
induced by $l_L$.  This map has the following properties: if 
we identify each $n$-cube of $X_L$ with $[0,1]^n$ then its 
restriction to each $n$-cube is equal to an affine map; the 
image of each vertex of $X_L$ is an integer; the image of 
each $n$-cube of $X_L$ is an interval of length $n$.  We 
view $f_L$ as defining a height function on $X_L$.  There 
is a regular covering map 
$X_L\rightarrow \widetilde\ttt_L$, with covering group 
$BB_L$.  

In~\cite{ufp} this is generalized, under the extra 
assumption that $L$ be connected.  For each 
set $S\subseteq \zz$, a CAT(0) cubical complex $X_L^{(S)}$ 
is defined, together with a regular branched covering map 
$X_L^{(S)}\rightarrow \widetilde \ttt_L$, and the 
group $G_L(S)$ is by definition the covering group 
for this covering.  The only branch points of this 
covering are the vertices of $X_L^{(S)}$ whose height 
is not in $S$, and the stabilizer in $G_L(S)$ of each 
branch point is a subgroup isomorphic to the fundamental
group $\pi_1(L)$.  (In particular, the construction is 
non-trivial only when $L$ is not simply-connected.)  
If $S\subseteq T\subseteq \zz$, there 
is a regular branched covering map $X_L^{(S)}\rightarrow 
X_L^{(T)}$, branched only at vertices of height in 
$T-S$, and the branched covering $X_L^{(S)}\rightarrow 
\ttt_L$ factors through this.  If $S\subseteq T$ then 
there is a surjective group homomorphism 
$G_L(S)\rightarrow G_L(T)$, and the branched covering map 
$X_L^{(S)}\rightarrow X_L^{(T)}$ is equivariant for this 
homomorphism.  The group $G_L(\zz)$ is equal to $BB_L$.  

The height function on $X_L$ induces a $G_L(S)$-invariant height
function on $X_L^{(S)}$ for each $S\subseteq \zz$.  Since $\widetilde
\ttt_L$ has only one vertex of each integer height, the group $G_L(S)$
acts transitively on the vertices of $X_L^{(S)}$ of each height.  The
intersection of the 2-skeleton of $X_L^{(S)}$ and the 0-level set
(i.e., the points of height 0) is a simplicial graph $\Gamma$ whose
0-skeleton is the vertices of height 0.  Orbits of edges in $\Gamma$
correspond to $A_L$-orbits of squares in $X_L$, or equivalently to
edges of $L$.  If $0\in S$ then $G_L(S)$ acts freely on $\Gamma$, and
so $\Gamma$ can be identified with a simplicial Cayley graph for
$G_L(S)$.  This gives a natural choice of generators for $G_L(S)$ when
$0\in S$, in bijective correspondence with the directed edges of $L$.
Under the composite map $G_L(S)\rightarrow G_L(\zz)=BB_L\rightarrow
A_L$ the element corresponding to the directed edge from vertex $x$ to
vertex $y$ maps to the element $xy^{-1}$.  To give a presentation for
$G_L(S)$ with this generating set, we first fix a finite collection
$\Omega$ of directed loops in $L$ that normally generates $\pi_1(L)$.
In other words, if one attaches discs to $L$ along the loops in
$\Omega$, one obtains a simply-connected complex.  Three families of
relators occur in this presentation, which we call $P(L,\Omega)$:

\begin{itemize} 
\item{} (Edge relations) for each directed edge $a$ with opposite 
edge $\overline a$, the relation $a\overline{ a}=1$; 

\item{} (Triangle relations) for each directed triangle $(a,b,c)$ in $L$ 
the relations $abc=1$ and $a^{-1}b^{-1}c^{-1}=1$; 

\item{} (Long cycle relations) for each $n\in S-\{0\}$ and each $(a_1,\ldots,
a_l)\in \Omega$ the relation $a_1^na_2^n \cdots a_l^n=1$.  

\end{itemize}

Another crucial property of these presentations is that only the long
relations corresponding to $n\in S$ hold in $G_L(S)$: if
$(a_1,\ldots,a_l)\in \Omega$ is not null-homotopic in $L$, then
$a_1^na_2^n\cdots a_l^n\neq 1$ for $n\notin S\cup\{0\}$.

We close by giving some references for more general background material.  
For CAT(0) spaces we suggest~\cite{bh}, and for homological finiteness
conditions such as the $FP$ property we suggest~\cite{brown}.  Each of 
these topics is also covered briefly in the appendices to~\cite{davbook}, 
which is our recommended source for Coxeter groups.  

\section{Bounding word lengths by CAT(0) distances} 
\label{sec:three}

Our first task is to establish Lemma~\ref{lem:cat0}, as stated in the
Introduction.  Recall that a singular point of a map between CAT(0)
spaces is a point at which the map is not a local isometry.

\begin{proof} (Lemma~\ref{lem:cat0}).   
As in the statement, let $x,x'\in X$ be distinct points 
such that $f(x)=f(x')$, and 
suppose that the geodesic arc $\gamma$ from $x$ to $x'$ does not pass
through the singular set.  In this case, $f\circ \gamma$ is a locally 
geodesic arc in $Y$, whose end points are both equal to $f(x)$.  In a 
CAT(0) space any locally geodesic arc is a geodesic arc, and the 
unique geodesic arc from $f(x)$ to $f(x')=f(x)$ is the constant 
arc of length~0.  This contradiction shows that $\gamma$ must pass 
through the singular set.  The claim follows.  
\end{proof} 

\begin{lemma} \label{lem:height}
Let $L$ be a finite connected flag complex of dimension $d$.  For any
$S\subseteq \zz$ the distance from the 0-level set in the 
CAT(0) space $X=X_L^{(S)}$ to a vertex of height $n$ is $|n|/\sqrt{d+1}$.  
\end{lemma}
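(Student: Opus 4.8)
The plan is to bound $d_X(v,Z)$ both above and below by $n/\sqrt{d+1}$, where $Z$ denotes the $0$-level set and $v$ is a vertex of height $n$; we may assume $n>0$, the case $n<0$ being symmetric and $n=0$ trivial. The lower bound will follow from the fact that the height function on $X=X_L^{(S)}$ is Lipschitz with constant $\sqrt{d+1}$, and the upper bound from an explicit descending path built out of the main diagonals of $(d+1)$-dimensional cubes.

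For the lower bound I would first record the local form of the height function. As recalled earlier, the height function $f_L$ on $X_L$ restricts on each $k$-cube, identified with $[0,1]^k$, to an affine map carrying the $2^k$ vertices into $\zz$ and having image an interval of length $k$; these constraints force it to have the shape $x\mapsto c+\sum_{i=1}^k\varepsilon_i x_i$ with $c\in\zz$ and every $\varepsilon_i\in\{\pm1\}$, so it is $\sqrt{k}$-Lipschitz on that cube. Since all of this is local, it passes to the induced height function $f$ on $X$; as $k\le d+1$ on every cube and $X$ carries the path metric obtained by gluing Euclidean cubes, $f$ is globally $\sqrt{d+1}$-Lipschitz. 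Hence for every $p\in Z$ we have $n=|f(v)-f(p)|\le\sqrt{d+1}\cdot d_X(v,p)$, and taking the infimum over $p$ gives $d_X(v,Z)\ge n/\sqrt{d+1}$.

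For the upper bound the key geometric input is that \emph{every vertex of $X$ of positive height is the top vertex of some $(d+1)$-cube}, where by the top vertex of a cube we mean its vertex of largest height; equivalently, that the descending link of such a vertex contains a $d$-simplex. In $X_L$ the descending link of every vertex is a copy of $L$, and it follows from the construction of $X_L^{(S)}$ in~\cite{ufp} that the descending link of any vertex of $X=X_L^{(S)}$ is a covering space of $L$, equal to $L$ itself at the non-branch vertices; since $\dim L=d$, any such complex contains a $d$-simplex, which proves the claim. Granting it, I would construct the path recursively: put $v_0=v$, and while $\mathrm{ht}(v_j)\ge d+1$, choose a $(d+1)$-cube $C_{j+1}$ with top vertex $v_j$ and let $v_{j+1}$ be its opposite vertex, of height $\mathrm{ht}(v_j)-(d+1)$. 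After $q=\lfloor n/(d+1)\rfloor$ steps we reach $v_q$ of height $s=n-q(d+1)\in\{0,1,\dots,d\}$; if $s=0$ we are done, and otherwise we choose one more $(d+1)$-cube with top vertex $v_q$ and run along its main diagonal until the height drops to $0$, which happens inside that cube because the diagonal descends from height $s>0$ to height $s-(d+1)<0$. The main diagonal of a unit $(d+1)$-cube has length $\sqrt{d+1}$ and lowers the height by $d+1$, so the concatenation of the $q$ full diagonals with the final partial one is a path in $X$ from $v$ to a point of $Z$ of total length $q\sqrt{d+1}+\tfrac{s}{d+1}\sqrt{d+1}=(q(d+1)+s)/\sqrt{d+1}=n/\sqrt{d+1}$, giving $d_X(v,Z)\le n/\sqrt{d+1}$.

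The only step I anticipate requiring real care is the geometric input above, namely that every positive-height vertex of $X_L^{(S)}$ is the top vertex of a $(d+1)$-cube: away from branch points this is immediate from the corresponding statement in $X_L$, so all the content lies in the local structure of $X_L^{(S)}$ at its branch points, which one reads off from~\cite{ufp}. The remaining manipulations are elementary Euclidean geometry in a single cube.
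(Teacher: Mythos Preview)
Your proof is correct and follows essentially the same approach as the paper: a lower bound via the Lipschitz constant of the height function (the paper phrases this equivalently as the maximal speed of descent being $\sqrt{d+1}$), and an upper bound by concatenating long diagonals of $(d{+}1)$-cubes, which exist because each descending link is either $L$ or $\widetilde L$ and hence $d$-dimensional. Your write-up is in fact somewhat more explicit than the paper's, which does not separately spell out the remainder step when $n$ is not a multiple of $d{+}1$.
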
 

\begin{proof} 
By symmetry it suffices to consider the case $n>0$.  Let 
$\gamma$ be a path starting at a vertex $v$ of height $n$, 
and moving at unit speed in $X$ to the 0-level set, and 
let $f:X\rightarrow \rr$ denote the height function on~$X$.   
Minimizing the length of $\gamma$ is equivalent to maximizing
the speed of descent, i.e., minimizing the derivative of 
$f\circ \gamma$.  

The initial direction of travel of the path $\gamma$ can be 
represented by a point of the link, $\link_X(v)$, of $v$ in 
$X$.  This is a simplicial complex in which each 
$m$-cube $C$ of $X$ that is incident on $v$ contributes 
one $(m-1)$-simplex, consisting of the unit tangent vectors 
at $v$ that point into $C$.  

If we identify an $m$-cube $C$ of $X$ 
with $[0,1]^m$, then $f$ restricted to $C$ is equal to 
$(t_1,\ldots,t_m)\mapsto t_1+t_2+\cdots+t_m+r$ for some integer~$r$. 
The gradient of $f$ on the cube $C$ is the vector $(1,1,\ldots,1)$, 
of length $\sqrt{m}$.  Thus any path $\gamma$ of fastest descent 
leaves $v$ travelling in the direction of the long diagonal of 
a cube $C$ of maximal dimension whose highest vertex is $v$.  

If $\sqrt{m}<n$ then the path will reach the unique lowest vertex $v'$ 
of $C$ before it reaches the 0-level set; at this vertex a new 
choice of cube $C'$ of maximal dimension with $v'$ as its highest 
vertex should be made.  

If $w$ is any vertex of $X$ then the cubes of $X$
that have $w$ as their highest vertex correspond to a subcomplex 
of $\link_X(w)$ called the descending or $\downw$-link, 
$\link^\downw_X(w)$.  Each descending link $\link^\downw_X(w)$ is 
isomorphic to either $L$ or its universal covering space 
$\widetilde L$, depending only on whether the height of 
$w$ lies in $S$.  In particular, each descending link has dimension 
equal to $d$, the dimension of $L$.  

It follows that we can always find at least one unit speed path
$\gamma$ starting at $v$ with constant rate of descent $\sqrt{d+1}$
and there is no path descending faster.  Hence the distance from $v$
to the 0-level set is $n/\sqrt{d+1}$ as claimed.
\end{proof}

For $S\subseteq \zz$, define $m(S):=\min\{ |n| : n\in S\}$.  

\begin{lemma} Suppose that $L$ is $d$-dimensional and that 
$0\in S\subseteq T\subseteq \zz$, and take the standard 
generating set for $G_L(S)$ and $G_L(T)$.  The word length 
of any non-identity element in the kernel of the map 
$G_L(S)\rightarrow G_L(T)$ is at least $m(T-S)\sqrt{2/(d+1)}$.  
\end{lemma}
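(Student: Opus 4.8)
The plan is to use the branched covering map $p\colon X_L^{(S)}\to X_L^{(T)}$ together with Lemma~\ref{lem:cat0}. Suppose $g\neq 1$ lies in the kernel of $G_L(S)\to G_L(T)$, and let $v_0$ be the vertex of height $0$ in $X=X_L^{(S)}$ that we use as the identity of the Cayley graph $\Gamma$. Set $v_1=g\cdot v_0$, which is again a vertex of height $0$, and $v_1\neq v_0$ since $G_L(S)$ acts freely on $\Gamma$ (as $0\in S$). The word length of $g$ in the standard generators equals $d_\Gamma(v_0,v_1)$, which is at least $d_X(v_0,v_1)$ because each edge of $\Gamma$ is an edge of the CAT(0) cubical complex $X$, hence has length at least that of the CAT(0) geodesic. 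So it suffices to bound $d_X(v_0,v_1)$ from below.

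Now I would apply Lemma~\ref{lem:cat0} to the map $p\colon X_L^{(S)}\to X_L^{(T)}$, which is a continuous map of CAT(0) metric spaces. Since $g$ maps to the identity of $G_L(T)$ and $p$ is equivariant for $G_L(S)\to G_L(T)$, we have $p(v_1)=p(g\cdot v_0)= (g\cdot v_0$ image$)=1\cdot p(v_0)=p(v_0)$, while $v_0\neq v_1$. Lemma~\ref{lem:cat0} then gives
\[ d_X(v_0,v_1)\ \geq\ d_X(v_0,\Sigma)+d_X(v_1,\Sigma), \]
where $\Sigma$ is the singular set of $p$, i.e.\ the set of points where $p$ fails to be a local isometry. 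The branched covering $X_L^{(S)}\to X_L^{(T)}$ is a local isometry away from its branch locus, which consists precisely of the vertices of $X_L^{(S)}$ whose height lies in $T-S$; so $\Sigma$ is exactly this set of vertices, and both terms on the right are distances from a height-$0$ vertex to the nearest vertex of height in $T-S$.

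The remaining step is to estimate $d_X(v_0,\Sigma)$ from below by $m(T-S)/\sqrt{d+1}$. Any vertex of $\Sigma$ has height $n$ with $|n|\geq m(T-S)$, and by Lemma~\ref{lem:height} the distance in $X$ from the $0$-level set to a vertex of height $n$ is $|n|/\sqrt{d+1}\geq m(T-S)/\sqrt{d+1}$; in particular $d_X(v_0,\Sigma)\geq m(T-S)/\sqrt{d+1}$, and likewise $d_X(v_1,\Sigma)\geq m(T-S)/\sqrt{d+1}$ since $v_1$ also has height $0$. Adding the two bounds yields $d_X(v_0,v_1)\geq 2\,m(T-S)/\sqrt{d+1}=m(T-S)\sqrt{4/(d+1)}$, which is stronger than the claimed bound $m(T-S)\sqrt{2/(d+1)}$; so the statement follows (and one could instead use just one of the two terms if only the weaker bound is wanted). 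I expect the only real subtlety to be the identification of the singular set of $p$ with the branch vertices --- i.e.\ checking that $p$ genuinely is a local isometry at every non-branch point, which follows from the description in~\cite{ufp} of $X_L^{(S)}\to X_L^{(T)}$ as a branched covering whose branch locus is the set of vertices of height in $T-S$ --- and the harmless point that the geodesic between $v_0$ and $v_1$ need not be an edge path, which is exactly why the CAT(0) inequality $d_\Gamma\geq d_X$ is used rather than a combinatorial argument.
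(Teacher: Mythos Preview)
Your overall strategy is exactly the paper's: apply Lemma~\ref{lem:cat0} to the branched covering $X_L^{(S)}\to X_L^{(T)}$, identify the singular set with the vertices of height in $T-S$, and use Lemma~\ref{lem:height} to bound the distance from a height-$0$ vertex to that set.  However, there is a genuine error in your comparison of $l(g)$ with $d_X(v_0,v_1)$.

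You assert that ``each edge of $\Gamma$ is an edge of the CAT(0) cubical complex $X$'', and from this deduce $l(g)=d_\Gamma(v_0,v_1)\geq d_X(v_0,v_1)$.  This is false.  The Cayley graph $\Gamma$ is the intersection of the $2$-skeleton of $X$ with the $0$-level set; each edge of $X$ joins vertices whose heights differ by $1$, so no edge of $X$ lies in the $0$-level set.  What does lie there is the diagonal of each square whose four vertices have heights $-1,0,0,1$: these diagonals are the edges of $\Gamma$, and they have CAT(0) length $\sqrt{2}$, not $1$.  Hence the correct comparison is
\[
d_X(v_0,v_1)\ \leq\ \sqrt{2}\,l(g),
\]
which is exactly what the paper uses.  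Combining this with your (correct) estimate $d_X(v_0,v_1)\geq 2\,m(T-S)/\sqrt{d+1}$ gives $l(g)\geq m(T-S)\sqrt{2/(d+1)}$, the stated bound and no more.  Your ``stronger'' conclusion $l(g)\geq m(T-S)\sqrt{4/(d+1)}$ is an artifact of the missing factor of $\sqrt{2}$ and is not valid.
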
 

\begin{proof} 
The Cayley graph $\Gamma(S)$ for $G_L(S)$ is embedded in the 
0-level set in $X:=X_L^{(S)}$, and similarly $\Gamma(T)$ is embedded
in the 0-level set in $Y:=X_L^{(T)}$.  Moreover the branched covering
map $X\rightarrow Y$ induces the natural quotient 
map $\Gamma(S)\rightarrow \Gamma(T)$.  Let $v$ be a height 0 vertex
of $X$.  Each standard generator for $G_L(S)$ is represented 
by the diagonal of a square of $X$ so for any $g\in G_L(S)$
the triangle inequality tells us that the word length $l(g)$ 
satisfies $d_X(v,gv)\leq \sqrt{2}l(g)$.  Now $g$ is in the kernel 
of the map to $G_L(T)$ if and only if $gv$ and $v$ map to the same 
vertex of $Y$.  Singular points for the map $X\rightarrow 
Y$ are vertices $w$ whose heights lie in $T-S$, and by 
Lemma~\ref{lem:height} these have $d_X(v,w)\geq m(T-S)/\sqrt{d+1}$.  
By Lemma~\ref{lem:cat0} it follows that $d_X(v,gv)\geq 2m(T-S)/\sqrt{d+1}$ 
and hence $l(g)\geq d_X(v,gv)/\sqrt{2} \geq m(T-S)\sqrt{2/(d+1)}$.  
\end{proof} 

\section{Digression on convexity} 

The arguments used in the previous section can be used to show 
that the 0-level sets are very rarely convex or even quasi-convex.  
The material in this section is not needed for our main theorem.

\begin{corollary} 
The 0-level set in $X_L^{(S)}$ is convex if and only if $L$ is a 
single simplex.  In this case $X_L^{(S)}=X_L$ does not depend on $S$.  
\end{corollary}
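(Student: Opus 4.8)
The plan is to prove both directions by leveraging the analysis of descending links and fastest-descent paths from the proof of Lemma~\ref{lem:height}. First I would dispose of the easy direction: if $L$ is a single $d$-simplex, then the right-angled Artin group $A_L$ is free abelian of rank $d+1$, so $X_L$ is the standard cubulation of $\mathbb R^{d+1}$, the height function $f_L$ is the linear functional $(t_0,\dots,t_d)\mapsto t_0+\cdots+t_d$, and the $0$-level set is an affine hyperplane, hence convex. Moreover when $L$ is a single simplex it is already simply-connected, so the branching construction of \cite{ufp} is trivial: there are no branch points and $X_L^{(S)}=X_L$ regardless of $S$. This gives the "if" direction and the final sentence.

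For the converse, suppose $L$ is not a single simplex; I want to exhibit two points in the $0$-level set $\Gamma$ whose CAT(0) geodesic in $X=X_L^{(S)}$ leaves the $0$-level set, contradicting convexity. The idea is to travel up and then back down along two different descent routes. Since $L$ is not a simplex, in any descending link $\link^\downw_X(w)\cong L$ or $\widetilde L$ there are at least two distinct top-dimensional simplices, equivalently, there exist two distinct $(d+1)$-cubes of $X$ sharing $w$ as their highest vertex but not lying in a common cube. Pick a vertex $v_0$ of height $0$; ascend along the long diagonal of one maximal cube above it to a vertex $v_1$ of height $\sqrt{d+1}\cdot$(something), then descend along the long diagonal of a \emph{different} maximal cube to a vertex $v_2$ of height $0$. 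Both the ascending and descending segments are fastest-(de)scent paths, so by the computation in Lemma~\ref{lem:height} each has length exactly $|$height change$|/\sqrt{d+1}$; concatenating, the path from $v_0$ to $v_2$ through $v_1$ has length $2h/\sqrt{d+1}$ where $h$ is the height of $v_1$. The point is that this concatenated path is in fact a local geodesic: at $v_1$ the incoming and outgoing directions are the (negatives of the) long diagonals of two cubes meeting only along lower-dimensional faces, and one checks in the spherical link $\link_X(v_1)$ that the angle between these two directions is $\pi$ (the two barycentric directions of non-adjacent top simplices of a flag complex are antipodal in the spherical join structure — this is where I would do a short explicit computation). Hence by the CAT(0) property this concatenation is \emph{the} geodesic from $v_0$ to $v_2$, and since $v_1$ has height $h>0$, the geodesic leaves the $0$-level set, so the $0$-level set is not convex.

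The main obstacle I anticipate is the angle computation at $v_1$: I need the two descent directions to subtend angle exactly $\pi$ in $\link_X(v_1)$ so that the broken path is genuinely a geodesic and not merely a path realizing the distance up to the triangle inequality. Concretely, if $e_0,\dots,e_d$ spans one maximal cube above $v_1$ and $e_0',\dots,e_d'$ spans another (both with edge-vectors pointing downward from $v_1$), the relevant directions are the unit vectors along $-(e_0+\cdots+e_d)$ and $-(e_0'+\cdots+e_d')$; I must verify that in the piecewise-spherical link these sit at distance $\pi$, which amounts to checking there is no "short" path in $\link_X(v_1)$ between the barycentres of the two top simplices $\sigma=\{e_0,\dots,e_d\}$ and $\sigma'=\{e_0',\dots,e_d'\}$ of $L\cong\link^\downw_X(v_1)$. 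When $\sigma$ and $\sigma'$ share no vertex this is the statement that the barycentres of two disjoint maximal simplices of a flag complex are at distance $\pi$ in the all-right spherical complex, which follows because any such path must pass through the full link of one of them. If every pair of top simplices of $L$ shares a vertex one has to argue a little differently (e.g.\ by choosing the ascending and descending cubes to be reflections differing in a single coordinate direction not shared), but in every non-simplex case one can arrange a genuinely antipodal pair, completing the contradiction.
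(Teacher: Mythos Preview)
Your easy direction and overall setup are fine and match the paper. The gap is in the angle claim at $v_1$: it is simply false that the barycentres of two distinct maximal simplices of an all-right spherical flag complex are at distance $\geq\pi$. Take $L$ to be two $2$-simplices $abc$ and $abd$ glued along the edge $ab$. In the all-right metric each triangle is an octant of $S^2$, and the geodesic between the two barycentres passes through the midpoint of the shared edge $ab$; its length is $2\arccos\sqrt{2/3}\approx 1.23<\pi$. So your concatenation $v_0\to v_1\to v_2$ is not a local geodesic at $v_1$, hence not the CAT(0) geodesic from $v_0$ to $v_2$, and your contradiction evaporates. Your remark about a ``spherical join structure'' is misleading---the descending link is just $L$ itself with the all-right metric, not a join---and the proposed fix for the shared-vertex case is too vague to assess; the example above, where every pair of maximal simplices shares an edge, shows the difficulty is real.

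The paper avoids this entirely by not trying to identify the geodesic from $x$ to $x'$. It takes $v$ at height~$1$ and observes that each of $x,x'$ lies in the interior of its cube, so a purely Euclidean computation inside that cube shows each is a \emph{local} minimum of $d(v,\cdot)$ on the $0$-level set. If the $0$-level set were convex, $d(v,\cdot)$ would be convex along the geodesic from $x$ to $x'$; a convex function on an interval with local minima at both endpoints is constant, and a nondegenerate geodesic segment at constant distance from $v$ contradicts the CAT(0) inequality. No angle computation at the apex is needed.
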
 

\begin{proof} 
If $L$ is a $d$-simplex then $X_L^{(S)}=X_L$ is a copy of $\rr^{d+1}$
and the 0-level set is an affine subspace.  For the converse, if $L$ 
is any flag complex other than a single simplex, then $L$ will contain 
at least two maximal simplices.  If $v$ is any vertex of $X_L^{(S)}$ 
of height one, the directions defined by the barycentres of these 
two maximal simplices give distinct geodesic paths from $v$ to the 
0-level set that are both locally distance minimizing, with end points
$x$, $x'$ of height~0.  Within the geodesic arc from $x$ to $x'$, at 
most one point can locally minimize distance to $v$; the assumption 
that this geodesic arc lies in the 0-level set leads to a
contradiction.  
\end{proof}

\begin{corollary} 
If either $L$ contains two simplices of maximal dimension $d$, 
or $\widetilde L$ does and $\zz-S$ is infinite, then 
the 0-level set in $X_L^{(S)}$ is not quasi-convex.  
\end{corollary}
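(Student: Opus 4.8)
The plan is to run the mechanism of the previous corollary at unboundedly large heights and to make it quantitative, so that it witnesses the failure of quasi-convexity and not merely of convexity. Write $X=X_L^{(S)}$, let $Z\subseteq X$ be the $0$-level set, and let $f\colon X\to\rr$ be the height function. Since $f$ restricted to each cube is affine with gradient $(1,\ldots,1)$, it is $\sqrt{d+1}$-Lipschitz, so a point of height $h$ lies at distance at least $|h|/\sqrt{d+1}$ from $Z$ (compare Lemma~\ref{lem:height}). Hence it suffices to produce, for every $R>0$, two points $x_1,x_2\in Z$ whose connecting geodesic in $X$ attains height at least $R$: such geodesics leave every bounded neighbourhood of $Z$.

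First I would do the bookkeeping with descending links. If $L$ has two $d$-simplices then so does $\widetilde L$, since $\widetilde L\to L$ is a surjective simplicial map of $d$-dimensional complexes; so by the description of descending links in Section~\ref{sec:three}, every vertex of $X$ has descending link — whether this is $L$ or $\widetilde L$ — containing at least two top-dimensional simplices. In the second case I only assume this of $\widetilde L$, so the same conclusion holds for every vertex of height outside $S$; and since $\zz-S$ is infinite — after possibly replacing $S$ by $-S$, using that $X_L^{(S)}$ is isometric to $X_L^{(-S)}$ by a height-reversing isometry fixing $Z$, we may assume $\zz-S$ is unbounded above — such vertices occur at arbitrarily large height. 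Either way: for every $R$ there is a vertex $v$ of height $k>R$ whose descending link contains two distinct top-dimensional simplices $\tau_1\neq\tau_2$.

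Next, given such a $v$, I would run the fastest-descent construction of Lemma~\ref{lem:height} twice, starting down the $(d+1)$-cube belonging to $\tau_1$ and then down the one belonging to $\tau_2$; this yields geodesics $\gamma_i=[v,x_i]$ of length $T:=k/\sqrt{d+1}=d_X(v,Z)$ from $v$ to points $x_i\in Z$. The crux is a lower bound for the angle $\angle_v(\gamma_1,\gamma_2)$ that does not degrade as $k\to\infty$. The initial directions of $\gamma_1,\gamma_2$ are the barycentres $u_1\in\tau_1$, $u_2\in\tau_2$ in $\link_X(v)$ with its all-right-angles piecewise-spherical metric; since $\tau_1\neq\tau_2$ are both top-dimensional, $u_2\notin\tau_1$, so any path in $\link_X(v)$ from $u_1$ to $u_2$ meets $\partial\tau_1$, whence
\[
\angle_v(\gamma_1,\gamma_2)=d_{\link_X(v)}(u_1,u_2)\ \ge\ \beta_d:=\arccos\sqrt{\tfrac{d}{d+1}}\ >\ 0,
\]
the distance from the barycentre of an all-right-angles spherical $d$-simplex to its boundary, a constant depending only on $d$. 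The CAT(0) comparison inequality for the triangle $v,x_1,x_2$ then gives $d_X(x_1,x_2)\ge 2T\sin(\beta_d/2)$, and in particular $x_1\neq x_2$. Feeding this into the $CN$-inequality for the midpoint $m$ of $[x_1,x_2]$,
\[
d_X(v,m)^2\ \le\ \tfrac12 d_X(v,x_1)^2+\tfrac12 d_X(v,x_2)^2-\tfrac14 d_X(x_1,x_2)^2\ \le\ T^2\cos^2(\beta_d/2),
\]
so $d_X(v,m)\le T\cos(\beta_d/2)$; since $f$ is $\sqrt{d+1}$-Lipschitz and $f(v)=k=\sqrt{d+1}\,T$, the point $m$ has height at least $k\bigl(1-\cos(\beta_d/2)\bigr)$, and hence lies at distance at least $k\bigl(1-\cos(\beta_d/2)\bigr)/\sqrt{d+1}$ from $Z$. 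As $k$ may be taken arbitrarily large and $1-\cos(\beta_d/2)>0$ is fixed, $Z$ is not quasi-convex.

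I expect the step requiring most care to be exactly the uniform angle bound: a priori two fastest-descent directions spanning distinct simplices $\tau_1\neq\tau_2$ could be arbitrarily close in $\link_X(v)$, the more so since that link is $\widetilde L$, which is infinite. The resolution is the observation above that their separation is controlled entirely by the intrinsic geometry of a single all-right-angles spherical $d$-simplex, hence by $d$ alone; this is what upgrades the bare ``strictly closer midpoint'' used in the convexity corollary to the quantitative estimate that quasi-convexity demands. A minor secondary point is verifying, in the second case, that there really are vertices of large height whose descending link is $\widetilde L$, which is supplied by the infiniteness of $\zz-S$ together with the height-reversing symmetry of $X_L^{(S)}$.
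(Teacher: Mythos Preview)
Your argument is correct and follows essentially the same route as the paper's sketch: pick a vertex $v$ of large height whose descending link has two top-dimensional simplices, drop two fastest-descent geodesics to points $x_1,x_2\in Z$, bound the angle at $v$ uniformly away from zero, and use CAT(0) comparison to force the midpoint of $[x_1,x_2]$ far from $Z$. Your treatment is in fact more careful than the paper's in two places: you justify the angle bound rigorously via the barycentre-to-boundary distance $\beta_d=\arccos\sqrt{d/(d+1)}$ in an all-right spherical $d$-simplex (the paper asserts the angle is at least $2\theta$ with $\theta=\beta_d$, which can fail when $\tau_1,\tau_2$ share a codimension-one face), and you address the ``arbitrarily large height in $\zz-S$'' issue in the second case via a height-reversing symmetry, which the paper passes over.
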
 

\begin{proof} 
We give only a sketch.  Let $\theta=\theta(d)$ be the angle 
in $\rr^{d+1}$ between the vector $(1,1,\ldots,1)$ and one 
of the coordinate hyperplanes.  Let $v$ be a vertex of 
height $N$ in $X:=X_L^{(S)}$.  If $L$ has a unique simplex 
of dimension $d$, then $N$ should be chosen in $\zz-S$, 
otherwise $N$ may be arbitrary.  
In this case there are two distance-minimizing 
geodesic paths from $v$ to the 0-level set, with end points 
$x$ and $x'$ as above, corresponding to leaving $v$ in the 
directions given by the barycentres of two distinct $d$-dimensional 
simplices of the descending link.  We view $x$, $x'$ and other 
points that depend on them as functions of~$N$.  
The angle at $v$ between these two geodesics is at least 
the constant $2\theta$.  The geodesic 
triangle with vertices $x$, $x'$ and $v$ is isosceles with 
angle at least $2\theta$ between the two equal sides.  The 
length of the equal sides is $N/\sqrt{d+1}$.  If 
$y$ is the midpoint of the geodesic arc from $x$ to $x'$, 
it follows that $d_X(v,y)\leq N\cos(\theta)/\sqrt{d+1}$.  
By increasing $N$ this distance can be made arbitrarily 
smaller than $N/\sqrt{d+1}$, the distance from $v$ to 
the 0-level set.  Hence for any $k$, there is an $N$ so 
that $y$ is not in the $k$-neighbourhood of the 0-level 
set.  Thus the 0-level set is not quasi-convex.  
\end{proof}

\section{Taut loop length spectra for $G_L(S)$} 

Throughout this section we fix a finite connected non-simply connected 
flag complex $L$.  For $S$ a subset of $\zz$ containing 0, let 
$\Gamma(S)$ denote the Cayley graph of $G_L(S)$ with respect to the 
standard generators.  We analyze 
the taut loop length spectrum $H(\Gamma(S))$.  As in~\cite{bhb}, it 
will be convenient to assume that elements of $S$ grow quickly, 
which we do as follows.  

Define $\alpha=\alpha(L)$ by $\alpha=\sqrt{2/(d+1)}$, where 
$d$ is the dimension 
of $L$.  For a finite set $\Omega$ of loops in $L$ that normally generates 
$\pi_1(L)$, let $\beta(L,\Omega)$ be the maximum of the lengths of 
the loops in $\Omega$, and define $\beta=\beta(L)$ to be the minimum value of 
$\beta(L,\Omega)$ over all such $\Omega$.  Choose an integer constant 
$C=C(L)$ so that $C> \beta/\alpha$ and $C\alpha > 3$.  For $F$ any subset 
of $\nn$, define $S(F)=\{0\}\cup\{ C^{2^n} : n\in F\}$.  With these 
definitions we prove an analogue of~\cite[Proposition~1]{bhb}. 

\begin{proposition}\label{symdiffprop} 
If $F,F'$ are subsets of $\nn$ so that $\Gamma(S(F))$ and $\Gamma(S(F'))$ 
are quasi-isometric then the symmetric difference of $F$ and $F'$ is finite. 
\end{proposition}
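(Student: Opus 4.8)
The plan is to leverage Bowditch's Lemma~\ref{lemma:bowditch} together with the two estimates on $H(\Gamma(S))$ that the earlier sections make available: an upper estimate coming from the presentation $P(L,\Omega)$, and a lower estimate coming from the kernel word-length bound (the last lemma of Section~\ref{sec:three}). First I would establish that the long cycle relators give rise to taut loops: for each $n\in S-\{0\}$ and each loop $(a_1,\ldots,a_l)\in\Omega$, the word $a_1^na_2^n\cdots a_l^n$ has length roughly $ln\le\beta n$, and it is trivial in $G_L(S)$ but, by the ``only the long relations corresponding to $n\in S$ hold'' property quoted at the end of Section~\ref{sec:Background}, it is \emph{not} trivial in $G_L(S')$ for any $S'\subseteq S$ with $n\notin S'$. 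Translating this into taut-loop language via the relationship between $H(\Gamma)$ and the relator lengths of a presentation (as in Bowditch's treatment of $C'(1/6)$ presentations, adapted here to account for the length-3 relators), one gets that for $n\in F$ there is a taut loop in $\Gamma(S(F))$ of length comparable to $C^{2^n}$ — say between $C^{2^n}/c_0$ and $c_0C^{2^n}$ for a constant $c_0=c_0(L)$ — while all taut loops of $\Gamma(S(F))$ have length comparable to $C^{2^n}$ for some $n\in F$ (the short relators and triangle relators only contribute taut loops of bounded length, absorbed into the threshold in the definition of $k$-relatedness).

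Next I would feed this into the quasi-isometry hypothesis. Suppose $\Gamma(S(F))$ and $\Gamma(S(F'))$ are $k$-quasi-isometric. By Lemma~\ref{lemma:bowditch}, $H(\Gamma(S(F)))$ and $H(\Gamma(S(F')))$ are $k$-related: for all large $l$, every $l\in H(\Gamma(S(F)))$ has a partner $l'\in H(\Gamma(S(F')))$ with $l/k\le l'\le lk$, and vice versa. Take $n\in F$ large. The corresponding taut length $l\asymp C^{2^n}$ must have a partner $l'\in H(\Gamma(S(F')))$ within a factor $k$, and $l'$ is in turn comparable to $C^{2^m}$ for some $m\in F'$. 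The point is that the gaps between the allowed lengths grow super-geometrically: $C^{2^{n+1}}/C^{2^n}=C^{2^n}\to\infty$, which eventually dwarfs any fixed multiplicative constant $kc_0^2$. Here is where the choice $C>\beta/\alpha$ and $C\alpha>3$ earns its keep: these inequalities (via Lemma~\ref{lem:height} and the kernel lemma, $\alpha=\sqrt{2/(d+1)}$) guarantee that the taut loop realizing $C^{2^n}$ in $\Gamma(S(F))$ genuinely lands in the window around $C^{2^n}$ and cannot be an artifact of a \emph{longer} relator being ``seen early'', nor shrunk below the next-lower scale $C^{2^{n-1}}$; they pin the taut length to the correct exponent. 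Consequently, for all sufficiently large $n\in F$, the matched exponent $m\in F'$ must equal $n$. Symmetrically, every sufficiently large $m\in F'$ is matched by the same $m\in F$. Hence $F$ and $F'$ agree above some bound, i.e., $F\bigtriangleup F'$ is finite.

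The main obstacle, and the step requiring the most care, is the precise passage from the presentation $P(L,\Omega)$ to membership and non-membership in the taut loop length spectrum. Two subtleties arise. First, $P(L,\Omega)$ is not $C'(1/6)$ — it has length-3 triangle relators — so Bowditch's clean identification of $H(\Gamma)$ with the set of relator lengths does not apply verbatim; one must argue that the triangle and edge relators, being of bounded length, only pollute $H(\Gamma(S(F)))$ below the threshold $k^2+2k+2$ and so are irrelevant to $k$-relatedness, while the long cycle relators of length $\asymp C^{2^n}$ survive as taut loops. Second, and more delicate, is the \emph{lower} bound: one must show the length-$C^{2^n}$ long-cycle loop is not null-homotopic in $\Gamma(S(F))_l$ for $l$ just below its length — equivalently that it is not a consequence of shorter relations. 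This is exactly what the kernel word-length lemma of Section~\ref{sec:three} delivers: any relation killing this loop in $G_L(S(F\setminus\{n\}))$ has length at least $m(S(F)-S(F\setminus\{n\}))\,\alpha=C^{2^n}\alpha$, which by $C\alpha>3$ exceeds $3$ times the relevant scale, forcing the loop to be taut at its own length in $\Gamma(S(F))$. Once these two points are nailed down the counting argument above is routine, so I would budget most of the write-up for the relator-length/taut-loop dictionary in the presence of the short relators.
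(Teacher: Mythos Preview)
Your proposal is correct and follows essentially the same route as the paper: the paper packages the presentation/covering/kernel-length arguments you outline into Theorem~\ref{thm:aitchgamma}, which localizes $H(\Gamma(S(F)))$ to $\{3\}\cup\bigcup_{n\in F}[\alpha C^{2^n},\beta C^{2^n}]$, and then deduces Proposition~\ref{symdiffprop} from Lemma~\ref{lemma:bowditch} via the super-geometric gap $C^{2^{n+1}}/C^{2^n}=C^{2^n}$, exactly as you do. One small sharpening of your last paragraph: the kernel lemma does not show that the specific long-cycle word is taut, but rather that the \emph{shortest} nontrivial element of $\ker\big(G_L(S(F{\setminus}\{n\}))\to G_L(S(F))\big)$ has length in $[\alpha C^{2^n},\beta C^{2^n}]$ and defines a taut loop---which is all you need.
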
 

Theorem~\ref{thm:main} follows immediately from this Proposition.  
To prove the Proposition we first describe $H(\Gamma(S(F)))$.  

\begin{theorem} \label{thm:aitchgamma}
For any $F\subseteq \nn$, the set $H(\Gamma(S(F)))$ is contained in 
the disjoint union 
$\{3\} \cup \bigcup_{n\in \nn} [\alpha C^{2^n},\beta C^{2^n}]$.  
The set $H(\Gamma(S(F)))\cap [\alpha C^{2^n},\beta C^{2^n}]$ is 
non-empty if and only if $n\in F$.  Also $3$ is in $H(\Gamma(S(F)))$ 
if and only if $L$ is not 1-dimensional.  
\end{theorem}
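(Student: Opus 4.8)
The plan is to analyze $H(\Gamma(S(F)))$ in three parts, matching the three claims of the theorem. First I would set up the presentation $P(L,\Omega)$ with $\Omega$ chosen to realize the minimum $\beta=\beta(L,\Omega)$, and observe that $\Gamma(S(F))_l$ is homotopy equivalent (via the remark after Lemma~\ref{lemma:bowditch}, i.e.\ the subcomplex principle) to the presentation 2-complex built from those relators of length $<l$. Since the relator lengths are $2$ (edge relations), $3$ (triangle relations), and $lC^{2^n}$ for the long cycle relation coming from a loop of length $l\le\beta$ and an exponent $C^{2^n}$ with $n\in F$, the only lengths at which $\pi_1(\Gamma(S(F))_l)\to\pi_1(\Gamma(S(F))_{l+1})$ could possibly change are $3$ and values in $\bigcup_{n\in F}[\alpha C^{2^n},\beta C^{2^n}]$ — here one uses $\alpha C^{2^n}\le C^{2^n}$ (since $\alpha=\sqrt{2/(d+1)}\le 1$ when $d\ge1$, and the $d=0$ case is degenerate) to see that even the shortest long relator, of length $C^{2^n}$, lies in the claimed interval, while the longest, of length $\beta C^{2^n}$, is its right endpoint. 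The gaps between consecutive intervals are genuine gaps because $C$ was chosen with $C\alpha>3$ and the exponents $C^{2^n}$ grow doubly-exponentially, so $\beta C^{2^n}<\alpha C^{2^{n+1}}$ for all $n$; this is exactly the role of the fast-growth hypothesis, mirroring~\cite{bhb}. This gives the containment $H(\Gamma(S(F)))\subseteq\{3\}\cup\bigcup_{n\in\nn}[\alpha C^{2^n},\beta C^{2^n}]$.

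Next, for the ``only if'' direction of the interval claim: if $n\notin F$, I would argue that no taut loop has length in $[\alpha C^{2^n},\beta C^{2^n}]$. The point is that the long cycle relator of exponent $C^{2^n}$ is simply not present, and — crucially — does not hold as a relation in $G_L(S(F))$: this is precisely the ``Another crucial property'' paragraph quoted in the Background, which says $a_1^m\cdots a_l^m\ne1$ in $G_L(S)$ for $m\notin S\cup\{0\}$ when the loop is essential in $L$. So for $l$ in that range, attaching the $2$- and $3$-cells plus all long cells of exponent $C^{2^{n'}}$ with $C^{2^{n'}}<\alpha C^{2^n}$ (equivalently $n'<n$, using the gap estimate again) already yields a $2$-complex with fundamental group $G_L(S(F))$ — no new relation of length in this window is needed — hence $\pi_1$ does not change across the window and the window contributes nothing to $H$. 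I would make this precise by the standard Tietze/van Kampen argument: $\Gamma(S(F))_l$ for $l$ in the window presents a quotient of $G_L(S(F\cap\{0,\dots,n-1\}))$, and one checks it presents $G_L(S(F))$ itself because the only relations available are consequences of the short ones plus the already-included long ones.

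For the ``if'' direction — $n\in F$ implies $H(\Gamma(S(F)))\cap[\alpha C^{2^n},\beta C^{2^n}]\ne\emptyset$ — I expect this to be the main obstacle, and it is where the machinery of Section~\ref{sec:three} enters. The issue is to produce an honest taut loop: the long relator $w_n=a_1^{C^{2^n}}\cdots a_l^{C^{2^n}}$ bounds a disc in $\Gamma(S(F))_l$ once $l>$ its length, but I must show that at some length $l_0\le\beta C^{2^n}$ the map $\pi_1(\Gamma(S(F))_{l_0})\to\pi_1(\Gamma(S(F))_{l_0+1})$ is genuinely non-injective, i.e.\ that $w_n$ (or some word representing the same kernel element) is not already null-homotopic using only shorter loops. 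Suppose for contradiction it were: then the corresponding element would be trivial in $G_L(S(F\setminus\{n\})\cup\{n\})$... — more to the point, consider the kernel of $G_L(S(F)\setminus\{C^{2^n}\}$-data$)\to G_L(S(F))$, wait — the cleaner route is the map $G_L(S')\to G_L(S(F))$ where $S'=S(F)\setminus\{C^{2^n}\}$ (removing that one element): its kernel is nontrivial (it's normally generated by the $w_n$-type elements, which are nontrivial in $G_L(S')$ by the crucial property), and the word-length lemma of Section~\ref{sec:three} bounds kernel elements below by $m(S(F)\setminus S')\cdot\alpha = C^{2^n}\alpha\ge\alpha C^{2^n}$. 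Combined with the obvious upper bound $\beta C^{2^n}$ on the length of $w_n$ itself, this pins the relevant taut length into the interval. I would need to translate ``kernel element of long-but-bounded word length'' into ``taut loop of length in the window'' — via the fundamental-group reformulation of $H$ in the lemmas following Lemma~\ref{lemma:bowditch} — and this translation, together with verifying the lower bound really applies here (the hypothesis $0\in S'$, $S'\subseteq S(F)$), is the delicate bookkeeping step. Finally, the statement about $3$: the triangle relations $abc=1$ are present for any $L$ of dimension $\ge1$ that has a triangle, but $\Gamma(S(F))$ has $3$-cycles (hence the relevant $\pi_1$ change at $l=3$) exactly when $L$ has an edge that lies in a triangle — for a flag complex, $L$ having a $2$-simplex is equivalent to $L$ not being $1$-dimensional — and when $L$ is $1$-dimensional there are no triangle relations and no $3$-cycles in $\Gamma(S(F))$ at all (its girth exceeds $3$), so $3\notin H$. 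I would phrase this last part as a short direct computation of the girth and of $\pi_1(\Gamma(S(F))_3)\to\pi_1(\Gamma(S(F))_4)$.
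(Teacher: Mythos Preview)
Your ``if'' direction (removing $n$ from $F$, using the kernel of $G_L(S(F'))\to G_L(S(F))$ and the Section~\ref{sec:three} word-length bound) and your treatment of~$3$ essentially match the paper. The genuine gap is in your containment and ``only if'' arguments.

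You assert that $\Gamma(S(F))_l$ is homotopy equivalent to the partial Cayley $2$-complex built from only the presentation relators of length $<l$, invoking the subcomplex principle. But that principle requires you to already know that this inclusion is a $\pi_1$-isomorphism, and that is precisely the nontrivial point: you must show that every loop of length $<l$ in $\Gamma(S(F))$ is a consequence of \emph{relators} of length $<l$. A loop of length $k$ is a word equal to $1$ in $G_L(S(F))$, and a priori every van~Kampen diagram for it might require long-cycle relators of exponent $C^{2^{n'}}$ with $\beta C^{2^{n'}}\gg k$; the form of the presentation alone does not rule this out. Your ``only if'' paragraph repeats the gap---the sentence ``the only relations available are consequences of the short ones plus the already-included long ones'' is exactly what needs proof---and also contains a separate confusion: attaching relator $2$-cells to the \emph{Cayley graph} of $G_L(S(F))$ produces a complex whose $\pi_1$ is a quotient of the relation subgroup, tending toward~$1$, never $G_L(S(F))$ itself.

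The paper closes the gap using the very same Section~\ref{sec:three} bound you invoke for the ``if'' direction. To show $k\notin H(\Gamma(S(F)))$ whenever $k>3$ lies outside $\bigcup_{n\in F}[\alpha C^{2^n},\beta C^{2^n}]$, take $n$ maximal with $\beta C^{2^n}<k$, set $F'=F\cap[0,n]$, and consider the covering $\Gamma(S(F'))\to\Gamma(S(F))$. Since $G_L(S(F'))$ has a presentation with relators of length $\le\beta C^{2^n}<k$, its simply-connected Cayley $2$-complex $\Delta$ has only small $2$-cells. A loop $\gamma$ of length $k$ in $\Gamma(S(F))$ either lifts to a closed loop in $\Gamma(S(F'))$---then it bounds in $\Delta$, hence in $\Gamma(S(F))_k$, so is not taut---or its lift is non-closed, producing a nontrivial element of $\ker\bigl(G_L(S(F'))\to G_L(S(F))\bigr)$ of word length at most $k$. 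But by the CAT(0) bound this kernel element has length at least $\alpha C^{2^m}$ with $m=\min(F\setminus F')$, and since $m\in F$ and $k\le\beta C^{2^m}$ the hypothesis forces $k<\alpha C^{2^m}$, a contradiction. So the word-length lower bound is the engine on \emph{both} sides of the biconditional, not only the side you flagged.
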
 

\begin{proof} 
The choice of $C$ ensures that $3< \alpha C^{2^0}$ and that 
for all $n$, $\beta C^{2^n} < \alpha C^{2^{n+1}}$, which implies 
that the union is disjoint.  Since $\Gamma=\Gamma(S(F))$ is a simplicial
Cayley graph, the edge relations $a\overline{a}=1$ do not 
contribute to $H(\Gamma)$, but the triangle relations imply 
that $3\in H(\Gamma)$ whenever $L$ has dimension at least two.  
If $F=\emptyset$ then the presentation $P(L,\Omega)$ contains 
only relations of length at most 3, so $H(\Gamma)$ is either 
empty if $L$ is 1-dimensional or is equal to $\{3\}$ otherwise.  

It remains to establish three statements
\begin{itemize} 
\item{} If $n\in F$ then $H(\Gamma(S(F)))\cap [\alpha C^{2^n}, 
\beta C^{2^n}] \neq \emptyset$; 

\item{} If $n\notin F$ then $H(\Gamma(S(F)))\cap [\alpha C^{2^n}, 
\beta C^{2^n}] =\emptyset $; 

\item{} If $k>3$ and $k\notin \bigcup_{n\in \nn} [\alpha C^{2^n}, 
\beta C^{2^n}]$ then $k\notin H(\Gamma(S(F)))$.  

\end{itemize} 

The second and third of these statements can be grouped together 
into a single fourth statement: 

\begin{itemize} 
\item{} If $k>3$ and $k\notin \bigcup_{n\in F} [\alpha C^{2^n}, 
\beta C^{2^n}]$ then $k\notin H(\Gamma(S(F)))$.  

\end{itemize} 

For the first statement, let $F'=F-\{n\}$, and consider the 
covering map $\Gamma(S(F'))\rightarrow \Gamma(S(F))$.  The 
group $G_L(S(F'))$ acts freely on $\Gamma(S(F'))$,  
so we may attach free orbits of 
2-cells to $\Gamma(S(F'))$ to make a simply-connected 
Cayley 2-complex $\Delta$.  Now let $K$ be 
the kernel of the map $G_L(S(F'))\rightarrow G_L(S(F))$, 
or equivalently the covering group for the regular 
covering $\Gamma(S(F'))\rightarrow \Gamma(S(F))$.   
The quotient $\Delta/K$ is a 2-complex with 1-skeleton 
the graph $\Gamma(S(F))$ and fundamental group $K$.  We know 
that any non-identity element of $K$ has word length at least 
$\alpha C^{2^n}$ and that there is a non-identity element of 
$K$ of word length $\beta C^{2^n}$.  The shortest non-identity 
element of $K$ defines a loop in $\Gamma(S(F))\subseteq \Delta/K$ 
that must be taut, since it is not null-homotopic in $\Delta/K$ 
whereas every strictly shorter loop in $\Gamma(S(F))$ is 
null-homotopic in $\Delta/K$.  

It remains to prove the fourth statement.  Fix an integer 
$k>3$ that is not an element of 
$\bigcup_{n\in F} [\alpha C^{2^n},\beta C^{2^n}]$.  
Choose $n\in \nn$ maximal so that $\beta C^{2^n} <k$, if 
such $n$ exists, and define $n:=-1$ in the case when 
$3<k< \alpha C$.  Now let $F':= F\cap [0,n]$, where by 
definition $[0,-1]=\emptyset$.  Once again, consider 
the covering map $\Gamma(S(F'))\rightarrow \Gamma(S(F))$.  
Since every relator in the presentation for $G_L(S(F'))$ 
has length at most $\beta C^{2^n}$, we may build a Cayley
2-complex $\Delta$ with 1-skeleton $\Gamma(S(F'))$ in 
which each 2-cell is attached to a loop of length at 
most $\beta C^{2^n}$.  Now suppose that $\gamma$ is a 
loop of length $k$ in $\Gamma(S(F))$.  If $\gamma$ 
lifts to a loop in $\Gamma(S(F'))$ then it cannot be 
taut, since every loop in $\Gamma(S(F'))$ is null-homotopic in 
$\Delta$.  If on the other hand $\gamma$ lifts to a non-closed 
path in $\Gamma(S(F'))$ then it corresponds to a non-identity
element of the kernel of the map $G_L(S(F'))\rightarrow 
G_L(S(F))$ of word length at most~$k$.  But the shortest 
element in the kernel of this map has length at least  
$\alpha C^{2^m}$, where $m$ is the least element of 
$S(F)-S(F')$.  By choice of $n$, we have that $k\leq \beta C^{2^m}$, 
and by hypothesis $k\notin [\alpha C^{2^m},\beta C^{2^m}]$.  
This contradiction shows that the loop $\gamma$ cannot be 
taut.  
\end{proof}

\begin{proof} (Proposition~\ref{symdiffprop}).  
For any $l \in [\alpha C^{2^n},\beta C^{2^n}]$ and $l'\in 
[\alpha C^{2^{n+m}},\beta C^{2^{n+m}}]$ for some $m>0$, we have that 
$l'/l \geq C^{2^n-1}$.  By Lemma~\ref{lemma:bowditch}, since 
$\Gamma(S(F))$ and $\Gamma(S(F'))$ are quasi-isometric, 
$H(\Gamma(S(F)))$ and $H(\Gamma(S(F')))$ are $k$-related.  
Now if $n$ is in the symmetric 
difference of $F$ and $F'$ we see that $C^{2^n-1} < k$.  
\end{proof} 

\section{Poincar\'e duality groups and semi-direct products}

A {\sl right-angled Coxeter group} is a group $W$ admitting a
presentation in which the only relators are that each generator 
has order two and that certain pairs of generators commute.  
A simplicial graph $K$ gives rise to a right-angled Coxeter 
group $W_K$, with generators the set $V(K)$ of vertices of $K$ 
and as commuting pairs the ends of each element of the edge 
set $E(K)$.  As we consider cases when $K$ is infinite, we 
start with a well-known lemma that will help us reduce to the finite 
case.  

\begin{lemma} \label{lemma:retract}
Let $K'$ be any full subgraph of the simplicial graph 
$K$.  The Coxeter group $W'=W_{K'}$ is a retract of 
the Coxeter group $W=W_K$.  
\end{lemma}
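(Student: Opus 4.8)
The plan is to exhibit explicit group homomorphisms $\iota\colon W'\to W$ and $\rho\colon W\to W'$ with $\rho\circ\iota=\mathrm{id}_{W'}$, using the given presentations. First I would define $\iota$ on generators by sending each vertex $v\in V(K')$ to the corresponding generator $v\in V(K)$. Since $K'$ is a \emph{full} subgraph, every edge of $K'$ is an edge of $K$, so every defining relation of $W'$ (namely $v^2=1$ for $v\in V(K')$, and $vw=wv$ for $\{v,w\}\in E(K')$) maps to a relation that holds in $W$; hence $\iota$ is a well-defined homomorphism. (At this stage injectivity of $\iota$ is not yet needed; it will follow once $\rho\circ\iota=\mathrm{id}$ is checked.)

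Next I would define the retraction $\rho\colon W\to W'$ on generators by $\rho(v)=v$ for $v\in V(K')$ and $\rho(v)=1$ for $v\in V(K)\setminus V(K')$. To see this is a homomorphism, I check it respects the defining relations of $W$: a relation $v^2=1$ maps either to $v^2=1$ in $W'$ (if $v\in V(K')$) or to $1=1$ (otherwise); a commuting relation $vw=wv$ for $\{v,w\}\in E(K)$ maps to $vw=wv$ if both endpoints lie in $K'$ — and this is a relation of $W'$ precisely because $K'$ is full, so $\{v,w\}\in E(K')$ — and otherwise maps to a trivial identity of the form $v=v$, $w=w$, or $1=1$. Thus $\rho$ is well-defined on $W$.

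Finally, on generators $v\in V(K')$ we have $\rho(\iota(v))=\rho(v)=v$, so $\rho\circ\iota$ is the identity on a generating set of $W'$ and hence equals $\mathrm{id}_{W'}$; in particular $\iota$ is injective, so $W'$ embeds in $W$ as a retract. The only subtlety worth flagging is exactly the hypothesis that $K'$ be \emph{full}: without it, $K'$ could contain a non-edge $\{v,w\}$ whose endpoints span an edge in $K$, so that $vw=wv$ fails in $W'$ but holds in the image of $\iota$ — then $\iota$ would not be injective (equivalently, $\rho$ might fail to be well-defined if one tried the same formula). So the one place care is needed is in invoking fullness at each point where a relation of one group must be matched to a relation of the other; everything else is a routine check on presentations.
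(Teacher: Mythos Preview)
Your proposal is correct and follows essentially the same approach as the paper: define the inclusion $\iota$ on generators and the retraction $\rho$ that kills the generators outside $V(K')$, then observe $\rho\circ\iota=\mathrm{id}_{W'}$. The paper's proof is terser (it does not spell out the relation checks), but the argument is the same; one small remark is that well-definedness of $\iota$ needs only that $K'$ be a subgraph---fullness is genuinely used only when verifying that $\rho$ respects the commuting relations of $W$ whose endpoints both lie in $V(K')$.
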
 

\begin{proof} 
Let $V:=V(K)$ and $V':=V(K')$.  The inclusion of $V'$ into 
$V$ extends to define a group homomorphism $i:W'\rightarrow W$.   
The function $\pi: V\rightarrow V'\cup\{1\}$ defined by 
$\pi(v')=v'$ for $v'\in V'$ and $\pi(v)=1$ for $v\notin V'$ 
extends to a group homomorphism $\pi:W\rightarrow W'$ and 
the composite $\pi\circ i:W'\rightarrow W'$ is the identity.  
\end{proof} 

Now suppose that a group $G$ acts by automorphisms on the 
graph $K$.  This induces an action of $G$ on $W_K$ by 
automorphisms, permuting the given generators for $W_K$, 
and so there is a semidirect product group $J=W_K\semi G$.  
Identify $G$ with its image inside the semidirect product $J$.  
A choice of generating set for $G$ together with a choice 
of $G$-orbit representatives in $V(K)$ gives rise 
to a generating set for $J$.  

Now suppose that $S\mapsto G(S)$ is a functor from the 
category of subsets of $\zz$ with inclusions as morphisms 
to the category of finitely generated groups and surjective
homomorphisms; for example $S\mapsto G_L(S)$ is such a 
functor for any connected finite flag complex $L$.  
Suppose further that $G(\emptyset)$ acts freely 
cocompactly on a (simplicial) graph $K(\emptyset)$ in such a way that 
any two vertices in the same $G(\emptyset)$-orbit are at 
edge path distance at least four.  For $S\subseteq \zz$, 
define $K(S)$ to be the quotient of $K(\emptyset)$ by the
kernel of the map $G(\emptyset)\rightarrow G(S)$, so that 
$G(S)$ acts freely cocompactly on the graph $K(S)$.  

For $S\subseteq \zz$, define $J(S)$ to be the semidirect product
$W_{K(S)}\semi G(S)$.  Then $S\mapsto J(S)$ is another functor from
subsets of $\zz$ and inclusions to finitely generated groups and
surjective group homomorphisms.  Fix a finite generating set for
$J(\emptyset)$ consisting of a finite generating set for
$G(\emptyset)$ and a set $V'$ of $G(\emptyset)$-orbit representatives in
$V(K(\emptyset))$.  As generating set for $J(S)$, take the image of
our given generating set for $J(\emptyset)$, and as generating set 
for $G(S)$ take the image of our given generating set for
$G(\emptyset)$.  For each $S$, the generating set for $G(S)$ is 
a subset of the generating set for $J(S)$, and its complement 
consists of generators that are in the kernel of the map
$J(S)\rightarrow G(S)$.  

It will be useful to have a presentation for $J(S)$ in terms of 
our generating set.  Since $G(\emptyset)$ acts freely on the 
graph $K(\emptyset)$, the Coxeter relators between all of the 
generators for $W_{K(\emptyset)}$ are consequences of a finite 
set of relators indexed by the orbit representatives of vertices
and edges in $K(\emptyset)$.  To describe these relations, we 
choose a set $E'$ of $G(\emptyset)$-orbit representatives of 
edges in $K(\emptyset)$, in such a way that each $e\in E'$ 
is incident on at least one $v\in V'$.  For each $u\in
V(K(\emptyset))$, let $g_u\in G(\emptyset)$ be the unique
element such that $g_u.u\in V'$.  Now define an integer $N_1$ 
by 
\[ N_1:= \max \{l(g_u) \colon \hbox{$u$ is incident on some edge in
  $E'$}\},\]  
where $l(g)$ denotes the word length of $g\in G(\emptyset)$.  
The relations in our presentation for $J(S)$ are of the following 
kinds:
\begin{itemize} 
\item{}
$v^2$ for each $v\in V'$; 
\item{}
relators $(vg_uug_u^{-1})^2$, where $e\in E'$, $u$ and $v$ are the 
vertices incident on $e$ and $v\in V'$; 
\item{}
the relators in a presentation for $G(S)$.  
\end{itemize}
Note that $J(S)$ is finitely presented whenever $G(S)$ is, and that 
the relators of the second kind are of length at most $4N_1+4$.  

In the theorem below we write $l_{J(S)}$ and $l_{G(S)}$ for 
the word length with respect to these generating sets.  

\begin{theorem} \label{thm:semiker}
Take notation and hypotheses as in the paragraphs above, and 
define $N:=2N_1$.  For all $S\subseteq T\subseteq \zz$, 
if $w\in J(S)$ is in the kernel of $J(S)\rightarrow J(T)$ 
and $l_{J(S)}(w)>N$ then there is $g\in G(S)-\{1\}$ with 
$l_{G(S)}(g)\leq l_{J(S)}(w)$ so that $g$ is 
in the kernel of $G(S)\rightarrow G(T)$.  
\end{theorem}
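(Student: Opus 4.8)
The plan is to argue by projecting a word for $w$ in $J(S)$ onto the two "factors" $W_{K(S)}$ and $G(S)$, and to extract the required element $g$ either directly as the $G(S)$-component of $w$ or from the structure of a word of minimal length representing $w$. Write $w$ as a word of length $\ell:=l_{J(S)}(w)$ in the given generators of $J(S)$. Each generator lies either in the given set of generators for $G(S)$ or in the image of $V'$, i.e.\ in $W_{K(S)}$. Since $J(S)=W_{K(S)}\semi G(S)$, the retraction $J(S)\rightarrow G(S)$ kills the $V'$-generators and is the identity on the $G(S)$-generators; let $g\in G(S)$ be the image of $w$ under this retraction, obtained from the given word by deleting all $V'$-letters. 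Then $l_{G(S)}(g)\leq\ell$ automatically, and because the whole construction is natural in $S$, the square relating $J(S)\rightarrow J(T)$, $J(S)\rightarrow G(S)$, $G(S)\rightarrow G(T)$ commutes, so $g$ lies in the kernel of $G(S)\rightarrow G(T)$. Thus the only thing that can go wrong is $g=1$, and the heart of the proof is to show that if $g=1$ then $l_{J(S)}(w)\leq N=2N_1$, contradicting the hypothesis $\ell>N$.

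So suppose $g=1$, i.e.\ $w\in W_{K(S)}$ is in the kernel of $J(S)\rightarrow J(T)$; since $W_{K(S)}\rightarrow W_{K(T)}$ is injective on the relevant part (or more simply since $W_{K(S)}\cap\ker(J(S)\to J(T))$ maps to $W_{K(T)}$), we get $w=1$ in $W_{K(T)}$, and I will in fact argue $w=1$ already in $W_{K(S)}$ — or, even if not, that $w$ has small $J(S)$-length. The key geometric input is the hypothesis that distinct vertices in the same $G(\emptyset)$-orbit of $K(\emptyset)$ are at edge-path distance at least four, which passes down to $K(S)$ for every $S$ (a quotient can only decrease distances, but the quotient is by a group acting freely, and the relevant statement is that the covering $K(\emptyset)\to K(S)$ is injective on balls of radius one, hence adjacent $V'$-type generators in any reduced word for $w$ interact in $W_{K(S)}$ exactly as they do in a right-angled Coxeter group on the star of a single vertex). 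The effect is that a geodesic word for an element of $W_{K(S)}$ that is "short in $J(S)$ via $G$-conjugations" can be rewritten: each $V'$-letter $v$ appearing conjugated by an intervening $G(S)$-subword $u$ represents the Coxeter generator $g_u^{-1}\cdot(g_u u \text{-}\textrm{vertex})$, and the presentation relators of the second kind $(vg_u u g_u^{-1})^2$ are exactly what is needed to see this.

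The main technical step — and the expected obstacle — is the bookkeeping that turns "$w\in W_{K(S)}$ and $\ell>2N_1$" into a contradiction. I would proceed as follows: take a geodesic word $W$ for $w$ in $J(S)$-generators, and look at the sequence of $V'$-letters $v_1,\dots,v_r$ appearing in it, separated by $G(S)$-subwords $u_0,u_1,\dots,u_r$. Pushing all $G(S)$-letters to the right using the semidirect-product action, $w$ equals the product in $W_{K(S)}$ of the conjugated generators $(u_0\cdots u_{i-1})^{-1}\,v_i\,(u_0\cdots u_{i-1})$ times the element $u_0\cdots u_r\in G(S)$; since $w\in W_{K(S)}$, the $G(S)$-part is trivial, so $u_0\cdots u_r=1$ in $G(S)$. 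Now each conjugated generator is a vertex of $K(S)$; call it $w_i$. The hypothesis $\ell>2N_1$ together with $N_1=\max l(g_u)$ forces, by a counting argument on total word length, that either $r=0$ (whence $w=u_0=1$, contradicting $l_{J(S)}(w)>N\geq 0$ applied to... — actually this case gives $w\in G(S)$, so $g=w\neq 1$, already handled) or that the "cost" of the $V'$-letters, measured after moving the $g_u$'s, is cheaper than the geodesic $W$, contradicting geodesicity. More precisely, I would show that if $g=1$ then the word $W$ can be replaced by one of length at most $2N_1$ by absorbing conjugating subwords into the $g_u$ of the nearest $V'$-orbit representative and using the distance-four hypothesis to guarantee no unexpected cancellation or collision among the resulting Coxeter generators; this rewriting does not increase the represented element, and its length bound $2N_1$ contradicts $\ell>N$. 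The delicate point throughout is ensuring that the distance-$\geq 4$ condition is strong enough to prevent two a priori distinct vertices $g_u\cdot u$ from coinciding or becoming adjacent in a way that would create new Coxeter relations unaccounted for by the presentation — this is precisely why four, rather than two or three, is the right constant, and verifying it is where the real work lies.
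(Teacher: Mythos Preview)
Your first case, when the image $\overline w$ of $w$ under the retraction $J(S)\to G(S)$ is nontrivial, is correct and matches the paper. The problem is entirely in the second case.

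When $\overline w=1$, i.e.\ $w\in W_{K(S)}$, you propose to show that $l_{J(S)}(w)\leq 2N_1$, deriving a contradiction. This is false. Take any $h\in\ker(G(S)\to G(T))$ with $l_{G(S)}(h)$ large, pick $v\in V'$, and set $w=v\cdot hvh^{-1}$. This $w$ lies in $W_{K(S)}$ (product of two Coxeter generators), is nontrivial there (the two generators are distinct since $G(S)$ acts freely, and the distance-four hypothesis makes them non-adjacent), and maps to $v\cdot v=1$ in $W_{K(T)}$. Its $J(S)$-length is comparable to $l_{G(S)}(h)$, which is unbounded. So there is no uniform bound on $l_{J(S)}(w)$ for such $w$, and your aside that $W_{K(S)}\to W_{K(T)}$ might be ``injective on the relevant part'' is exactly wrong: the non-injectivity of this map is the whole content of the case.

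The paper does not seek a contradiction here; it constructs a \emph{different} $g$. Write a geodesic word for $w$ as $h_0v_1h_1\cdots v_nh_n$ and set $g_i=h_0\cdots h_{i-1}$, so that $w=\prod_i g_iv_ig_i^{-1}$ as a word in Coxeter generators of $W_{K(S)}$. Since $w\neq 1$ in $W_{K(S)}$ but $w=1$ in $W_{K(T)}$, Tits' solution to the word problem gives a Tits move applicable to the image in $W_{K(T)}$ but not in $W_{K(S)}$. Either move type yields indices $i<j$ with $g_i^{-1}g_j$ (possibly times some $g_u$ of length $\leq N_1$) nontrivial in $G(S)$ but trivial in $G(T)$. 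A subword argument then gives $l_{G(S)}(g)\leq l_{J(S)}(w)/2+N_1\leq l_{J(S)}(w)$. Your rewriting $w$ as a product of conjugated $V'$-generators is the right first step, but the missing idea is Tits' theorem, which converts the discrepancy between $W_{K(S)}$ and $W_{K(T)}$ into a concrete element of $\ker(G(S)\to G(T))$.
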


\begin{proof} 
In the case when $w$ is not in the kernel of the map $J(S)\rightarrow
G(S)$, we may take $g=\overline w$, the image of $w$, since 
this element is in the kernel of $G(S)\rightarrow G(T)$ and 
$l_{G(S)}(g)\leq l_{J(S)}(w)$.

Before starting the remaining (more difficult) case, we 
recall Tits' solution to the word problem for a right-angled Coxeter 
group~\cite[Theorem~3.4.2]{davbook}.  (This is usually only stated 
for the finitely generated case, but the general case follows via
Lemma~\ref{lemma:retract}.)  
If $w=v_1v_2\cdots v_l$ is a word in the standard generators 
for a right-angled Coxeter group that represents the identity, then 
$w$ can be reduced to the trivial word using some sequence of moves 
of two types: 
\begin{itemize}
\item{}
if $v,v'$ are Coxeter generators that commute, replace 
the subword $vv'$ by~$v'v$; 

\item{}
replace a subword $vv$ by the trivial subword.  
\end{itemize} 

The kernel of the map $J(S)\rightarrow G(S)$ is the right-angled 
Coxeter group $W(S):=W_{K(S)}$.  Let $w$ be in the kernel of this 
map as well as in the kernel of the map $J(S)\rightarrow J(T)$.  
Pick a shortest word in the generators for $J(S)$ representing 
$w$, and write this word in the form $w=h_0v_1h_1v_2h_2\cdots h_{n-1}v_nh_n$, 
where each $v_i\in V'$, each $h_i\in G(S)$, and so that 
$l_{J(S)}(w)=n+\sum_{i=0}^n l_{G(S)}(h_i)$.  
Now define $g_i= h_0h_1\cdots h_{i-1}$ for $1\leq i\leq n$.  
We have that 
\[w=h_0v_1h_1v_2h_2\cdots h_{n-1}v_nh_n = (g_1v_1g_1^{-1})(g_2v_2g_2^{-1})
\cdots (g_nv_ng_n^{-1}).\]
This second expression for $w$ will not be reduced in general, but 
each subword $g_iv_ig_i^{-1}$ is equal to one of the standard 
Coxeter generators for the subgroup $W(S)$.  By hypothesis $w$ 
is non-trivial in $W(S)$ but is in the kernel of the 
map $W(S)\rightarrow W(T)$.  Hence there must be a Tits move that 
can be applied to the image of this expression in $W(T)$ that 
cannot be applied to the same expression in $W(S)$.  

If there is a Tits move of the second type that can be applied 
in $W(T)$ but not in $W(S)$, this implies that there exist $i$ 
and $j$ with $1\leq i < j\leq n$ so that $g_i=g_j\in G(T)$, but 
$g_i\neq g_j\in G(S)$.  If on 
the other hand there is a Tits move of the first type that can 
be applied in $W(T)$ but not in $W(S)$, there exist $1\leq i<j\leq n$ 
so that $g_i=g_jg_u\in G(T)$ but $g_i\neq g_jg_u\in G(S)$, where 
$g_u$ is one of the elements that takes a vertex of some edge 
in $E'$ to a vertex in $V'$, and so  
by definition of $N_1$, $l_{G(S)}(g_u)\leq l_{G(\emptyset)}(g_u)\leq N_1$.  

Define an element of $J(S)$ by $w':=v_ih_iv_{i+1}\cdots h_{j-1}v_j$,
where $i$~and~$j$ are as above.  Since the expression
$w=h_0v_1h_1v_2h_2\cdots h_nv_n$ is of minimal length in terms of our
generators for $J(S)$, the length of the defining expression for $w'$
is also minimal.  But $l_{J(S)}(w')$ is greater than or equal to the
length of its image in $G(S)$, $h_ih_{i+1}\cdots h_{j-1}=
g_i^{-1}g_j$.  Thus $l_{J(S)}(w')\geq l_{G(S)}(g_i^{-1}g_j)$.
Depending on which sort of Tits move was applied, $g_i^{-1}g_j$ is either
a non-trivial element of the kernel of the map $G(S)\rightarrow G(T)$
or differs from such an element by some $g_u$.  Let $g$ be this
element of the kernel, and note 
that $l_{G(S)}(g)\leq l_{G(S)}(g_i^{-1}g_j)+N_1$.
Since $w$ maps to the identity element of $G(S)$ and the image in
$G(S)$ of $w'$ is a path whose endpoints are at distance at least
$l_{G(S)}(g_i^{-1}g_j)$, we see that $l_{J(S)}(w)\geq
2l_{G(S)}(g_i^{-1}g_j)$.   Putting
these inequalities together we obtain 
$l_{G(S)}(g)\leq l_{J(S)}(w)/2 +N_1$.  Since we may assume that 
$l_{J(S)}(w)>N= 2N_1$, this implies that $l_{G(S)}(g)\leq l_{J(S)}(w)$ as
required. 
\end{proof} 

Now we specialize to the case of interest; the case when 
$G(S)$ is the group $G_L(S)$ for some finite flag complex~$L$
that is connected but not simply connected.  
In this case, Theorem~\ref{thm:semiker} allows us to prove
an analogue of Theorem~\ref{thm:aitchgamma} for the semidirect product
$J(S):=W_{K(S)}\semi G_L(S)$.  We define constants $\alpha=\alpha(L)$, 
$\beta:=\beta(L)$ and $C:=C(L)$ as in the previous section, and for 
$F\subseteq\nn$ we define $S(F)\subseteq\nn$ as before.  Finally, we
define $M:=4N_1+4$, which depends on both $L$ and on the action of 
$G(\emptyset)$ on $K(\emptyset)$, and we denote by $\Lambda(S(F))$ 
the Cayley graph $\Gamma(J(S(F)))$.  

\begin{theorem} 
For any $F\subseteq \nn$, the set $H(\Lambda(S(F)))$ is contained in 
the union $[0,M]\cup \bigcup_{n\in \nn}[\alpha C^{2^n},\beta C^{2^n}]$.  
If $\alpha C^{2^n}>M$, then $H(\Lambda(S(F)))\cap [\alpha C^{2^n},
\beta C^{2^n}]$ is non-empty if and only if $n\in F$.  
\end{theorem}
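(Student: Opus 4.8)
The plan is to mimic the proof of Theorem~\ref{thm:aitchgamma} very closely, using Theorem~\ref{thm:semiker} to transfer all of the needed information about kernels of maps $G_L(S(F'))\rightarrow G_L(S(F))$ to the corresponding maps $J(S(F'))\rightarrow J(S(F))$. The key point is that Theorem~\ref{thm:semiker} says, roughly, that once a kernel element of $J(S)\rightarrow J(T)$ has word length exceeding $N=2N_1$, its word length is bounded below by the word length of some non-trivial kernel element of $G(S)\rightarrow G(T)$; combined with the presentation of $J(S)$ whose relators all have length at most $M=4N_1+4$ (apart from those inherited from $P(L,\Omega)$), this lets us reuse the three-bullet structure from the proof of Theorem~\ref{thm:aitchgamma}.

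First I would record the elementary facts: the presentation of $J(S(F))$ has relators of three kinds --- length-$1$ relators $v^2$, length-$(\leq 4N_1+4)$ Coxeter-conjugation relators, and the relators of $P(L,\Omega)$ with $n\in S(F)$, which have lengths in $\bigcup_{n\in F}[\alpha C^{2^n},\beta C^{2^n}]$ together with the length-$3$ triangle relators and length-$2$ edge relators. Since $3\leq M$ (as $N_1\geq 1$ in any non-trivial situation), every relator has length either $\leq M$ or in some interval $[\alpha C^{2^n},\beta C^{2^n}]$ with $n\in F$. So if $F=\emptyset$, the analogue of the first paragraph of the earlier proof gives that $H(\Lambda(S(\emptyset)))\subseteq[0,M]$, and in general the only possible taut loop lengths outside $[0,M]$ lie in the relevant union of intervals --- this is the containment statement. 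For the non-emptiness direction, given $n\in F$ with $\alpha C^{2^n}>M$, set $F'=F-\{n\}$ and consider the regular covering $\Lambda(S(F'))\rightarrow\Lambda(S(F))$ with covering group $K$ = kernel of $J(S(F'))\rightarrow J(S(F))$. One attaches free orbits of $2$-cells along the relators of $P(L,\Omega)$-type and the Coxeter-type relators to build a simply connected $\Delta$, forms $\Delta/K$, and argues that the shortest non-trivial element of $K$ gives a taut loop. By Theorem~\ref{thm:semiker}, since $S(F)-S(F')=\{C^{2^n}\}$ and the corresponding kernel in $G_L$ has shortest element of length $\geq\alpha C^{2^n}>M>N$, any $w\in K$ of length $>N$ has $l_{J(S(F'))}(w)\geq\alpha C^{2^n}$; and short elements ($\leq N$) cannot be in $K$ because... here is where I need to be slightly careful: I should check that there is no non-trivial kernel element of length $\leq N$, which follows because such an element would have to map to the identity in both $G_L(S(F))$ and $W_{K(S(F))}$ hence be trivial, OR handle the short case directly. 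For the upper end, I want a taut loop of length $\leq\beta C^{2^n}$: the long cycle relator $a_1^{C^{2^n}}\cdots a_l^{C^{2^n}}$, viewed inside $\Lambda(S(F'))$, descends to $\Lambda(S(F))$ and (as in the earlier argument) its length $\leq\beta C^{2^n}$ realizes a kernel element, so the shortest taut loop in the relevant range has length in $[\alpha C^{2^n},\beta C^{2^n}]$.

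For the ``only if'' direction (if $n\notin F$ and $\alpha C^{2^n}>M$ then $H(\Lambda(S(F)))\cap[\alpha C^{2^n},\beta C^{2^n}]=\emptyset$) I would run the analogue of the ``fourth statement'' in the earlier proof: fix $k$ in this interval, let $F'=F\cap[0,m-1]$ where $C^{2^m}$ is the least element of $S(F)$ exceeding $\beta C^{2^{n-1}}$ (choosing indices so that all relators of $J(S(F'))$ have length $\leq\beta C^{2^{n-1}}<\alpha C^{2^n}\leq k$, using disjointness of the intervals and $C\alpha>3$), build $\Delta$ over $\Lambda(S(F'))$ with all $2$-cells of length $<k$, and argue a length-$k$ loop $\gamma$ in $\Lambda(S(F))$ either lifts to a loop in $\Lambda(S(F'))$ (hence null-homotopic in $\Delta$, not taut) or lifts to a path giving a non-trivial kernel element of $J(S(F'))\rightarrow J(S(F))$ of length $\leq k$. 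In the latter case, since $k>M\geq N$, Theorem~\ref{thm:semiker} produces $g\in G_L(S(F'))-\{1\}$ in the kernel of $G_L(S(F'))\rightarrow G_L(S(F))$ with $l_{G_L(S(F'))}(g)\leq k$; but the least element of $S(F)-S(F')$ is some $C^{2^{m'}}$ with $m'\geq n$, so $l_{G_L(S(F'))}(g)\geq\alpha C^{2^{m'}}\geq\alpha C^{2^n}$, while $k\leq\beta C^{2^n}$ and $k\notin[\alpha C^{2^n},\beta C^{2^n}]$ forces $k<\alpha C^{2^n}\leq l_{G_L(S(F'))}(g)\leq k$, a contradiction. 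Hence no such $\gamma$ is taut.

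The main obstacle is the bookkeeping around the threshold $N$ versus $M$: Theorem~\ref{thm:semiker} only controls kernel elements of $J$ of length $>N$, so I must ensure at every step that the loops/elements under consideration are genuinely longer than $N$ (this is exactly why the hypothesis $\alpha C^{2^n}>M$, with $M>N$, appears), and I must separately dispose of the short kernel elements --- but those are easy, since a word of length $\leq N$ lying in the kernel of $J(S(F'))\rightarrow J(S(F))$ and mapping trivially to $G_L(S(F'))$ would lie in $W_{K(S(F'))}$ and map to the identity in $W_{K(S(F))}$, and one checks directly (using that $G(\emptyset)$-orbit vertices are $\geq 4$ apart, so short words in $J$ interact with the Coxeter part in a controlled way) that such an element is already trivial, or alternatively one absorbs this into the choice of constant so that $[0,M]$ simply swallows all short taut loops. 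A secondary nuisance is making the index choices in the ``only if'' direction precise enough that the relators of $J(S(F'))$ all have length strictly less than $k$; this is routine given the disjointness inequalities $3<\alpha C$ and $\beta C^{2^n}<\alpha C^{2^{n+1}}$ established in the proof of Theorem~\ref{thm:aitchgamma} together with $M<\alpha C^{2^n}$.
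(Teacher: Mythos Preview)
Your approach matches the paper's proof essentially step for step: the paper also reduces to two bullet points (non-emptiness when $n\in F$ and $\alpha C^{2^n}>M$; absence when $k>M$ and $k\notin\bigcup_{n\in F}[\alpha C^{2^n},\beta C^{2^n}]$), proves the first via $F'=F-\{n\}$ and the covering $\Lambda(S(F'))\rightarrow\Lambda(S(F))$ together with Theorem~\ref{thm:semiker}, and proves the second via $F'=F\cap[0,n]$ and the analogous covering argument.

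There is, however, one genuine slip in your write-up.  You assert that ``every relator has length either $\leq M$ or in some interval $[\alpha C^{2^n},\beta C^{2^n}]$ with $n\in F$ \ldots\ this is the containment statement.''  That inference is invalid: the taut loop spectrum is not in general contained in the set of relator lengths (that holds for $C'(1/6)$ presentations but not here).  The containment $H(\Lambda(S(F)))\subseteq[0,M]\cup\bigcup_{n\in\nn}[\alpha C^{2^n},\beta C^{2^n}]$ does \emph{not} follow from inspecting the presentation; it is a consequence of your ``only if'' argument once you state it in the stronger form the paper uses, namely: for \emph{every} $k>M$ with $k\notin\bigcup_{n\in F}[\alpha C^{2^n},\beta C^{2^n}]$ (not just for $k$ lying in an interval with $n\notin F$) one has $k\notin H(\Lambda(S(F)))$.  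Your covering argument with $F'=F\cap[0,n]$ proves exactly this stronger statement, so you should simply replace the relator-length remark by this formulation and observe that it implies the containment claim as a by-product.

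On the short-kernel-element worry you flag: the paper handles this by invoking Theorem~\ref{thm:semiker} together with $M>N$ and the fact that any non-identity element of the $G$-kernel has length at least $\alpha C^{2^n}$; strictly, one observes that the proof of Theorem~\ref{thm:semiker} yields the unconditional bound $l_{G(S)}(g)\leq l_{J(S)}(w)/2+N_1$ in the case $\overline w=1$, so a nontrivial $w\in K'$ with $l_{J(S)}(w)\leq N=2N_1$ would produce a nontrivial $g$ in the $G$-kernel with $l_{G(S)}(g)\leq 2N_1<\alpha C^{2^n}$, a contradiction.  Your suggested alternatives (mapping to $G_L(S(F))$ and $W_{K(S(F))}$, or absorbing into $[0,M]$) do not quite work as stated, so this extraction from the proof of Theorem~\ref{thm:semiker} is the cleanest fix.
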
 

\begin{proof} 
Since the presentation for $G_L(S(\emptyset))$ has only relators 
of length at most~3, we see that our presentation for 
$J(S(\emptyset))$ consists of relators of length at most $M$.  
This verifies the claim in the case when $F=\emptyset$.  
As in the proof of Theorem~\ref{thm:aitchgamma}, it suffices 
to verify two claims: 
\begin{itemize} 
\item{}If $n\in F$ and $\alpha C^{2^n}>M$ then $H(\Lambda(S(F)))\cap 
[\alpha C^{2^n},\beta C^{2^n}] \neq \emptyset$; 
\item{}If $k>M$ and $k\notin \bigcup_{n\in F} [\alpha C^{2^n},\beta 
C^{2^n}]$, then $k\notin H(\Lambda(S(F)))$.  
\end{itemize}  
These statements can be verified exactly as in Theorem~\ref{thm:aitchgamma}.  
For the first, we consider $F':=F-\{n\}$ and look at the covering map 
$\Lambda(S(F'))\rightarrow \Lambda(S(F))$.  Attach free $J(S)$-orbits 
of 2-cells to $\Lambda(S(F'))$ to make a simply-connected Cayley complex
$\Delta$, and let $K'$ be the kernel of the map $J(S(F'))\rightarrow
J(S(F))$.  The quotient $\Delta/K'$ has 1-skeleton $\Lambda(S(F))$ and 
fundamental group $K'$.  As before, we know that there is an element 
of $K'\cap G(S(F'))$ of length $\beta C^{2^n}$ and that any
non-identity element of this subgroup has length at least $\alpha
C^{2^n}$.  Since $M=4N_1+4> N=2N_1$, Theorem~\ref{thm:semiker} tells 
us that the word length of any non-identity element of $K'$ is also 
at least $\alpha C^{2^n}$.  Now the shortest non-identity element of
$K'$ defines a taut loop in the required range.  

For the second statement, given such a $k$, take $n$ maximal so that 
$\beta C^{2^n} <k$, let $F':=F\cap [0,n]$, and consider the covering 
map $\Lambda(S(F'))\rightarrow \Lambda(S(F))$.  As before, we can 
build a Cayley 2-complex $\Delta$ with 1-skeleton $\Lambda(S(F'))$ 
in which each 2-cell is attached to a loop of length at most 
$\max\{M,\beta C^{2^n}\}$.  If $\gamma$ is a loop in $\Lambda(S(F))$ 
of length $k$, then either $\gamma$ is not taut, or the lift of 
$\gamma$ to $\Lambda(S(F'))$ is a non-closed path.  In the second case  
one obtains a non-identity element in the kernel of the map $J(S(F'))
\rightarrow J(S(F))$ of length at most $k$.  Since $k>M>N$,
Theorem~\ref{thm:semiker} tells us that there is a non-identity 
element of length at most $k$ in the kernel of the map 
$G(S(F'))\rightarrow G(S(F))$, which cannot happen.  This 
contradiction shows that $\gamma$ cannot be taut.  
\end{proof} 

The next two corollaries follow easily by the same proofs as 
in the previous section.  

\begin{corollary} 
If $F$, $F'$ are subsets of $\nn$ so that $\Lambda(S(F))$ and 
$\Lambda(S(F'))$ are quasi-isometric, the symmetric difference 
of $F$ and $F'$ is finite.
\end{corollary}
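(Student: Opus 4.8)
The plan is to repeat the argument proving Proposition~\ref{symdiffprop} almost verbatim, substituting the theorem just proved (the description of $H(\Lambda(S(F)))$) for Theorem~\ref{thm:aitchgamma}. Suppose $\Lambda(S(F))$ and $\Lambda(S(F'))$ are $k$-quasi-isometric. First I would invoke Lemma~\ref{lemma:bowditch} to conclude that $H(\Lambda(S(F)))$ and $H(\Lambda(S(F')))$ are $k$-related, and the goal is then to bound the symmetric difference of $F$ and $F'$.

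Next I would fix $n$ in the symmetric difference, say $n\in F\setminus F'$, and assume in addition that $\alpha C^{2^n}>M$; there are only finitely many $n$ failing this, so the restriction is harmless. By the theorem just proved, $H(\Lambda(S(F)))$ contains a point $l$ of the interval $I_n:=[\alpha C^{2^n},\beta C^{2^n}]$, while $H(\Lambda(S(F')))$ is disjoint from $I_n$. Since $H(\Lambda(S(F')))\subseteq[0,M]\cup\bigcup_m I_m$, and the choice of $C$ makes the $I_m$ pairwise disjoint with $\beta C^{2^m}<\alpha C^{2^{m+1}}$ (as recorded in the proof of Theorem~\ref{thm:aitchgamma}), any witness $l'\in H(\Lambda(S(F')))$ to $k$-relatedness, so that $l/k\le l'\le lk$, must lie in $[0,M]$, or in some $I_m$ with $m<n$, or in some $I_m$ with $m>n$. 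In each of these cases a short computation using $C>\beta/\alpha$ shows that $\max(l'/l,\,l/l')$ is at least of the order of $C^{2^{n-1}-1}$ (and at least $\alpha C^{2^n}/M$ in the $[0,M]$ case), which exceeds $k$ once $n$ is sufficiently large. That contradiction bounds $n$, so the symmetric difference of $F$ and $F'$ is finite.

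I do not expect any genuine obstacle here: the argument is structurally identical to that of Proposition~\ref{symdiffprop}. The one real difference, meriting a line of comment, is that the preceding theorem only pins down $H(\Lambda(S(F)))\cap I_n$ when $\alpha C^{2^n}>M$, rather than for every $n$ as in Theorem~\ref{thm:aitchgamma}; but since only finitely many $n$ violate $\alpha C^{2^n}>M$, this does not affect finiteness of the symmetric difference. One should likewise note that the low-complexity part of the spectrum is now the bounded set $[0,M]$ in place of $\{3\}$, which causes no difficulty since it too is absorbed into the ``$l'$ below $I_n$'' case.
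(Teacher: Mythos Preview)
Your proposal is correct and follows the same route the paper intends: the authors simply write that this corollary ``follow[s] easily by the same proofs as in the previous section,'' i.e.\ by rerunning the argument of Proposition~\ref{symdiffprop} with the theorem on $H(\Lambda(S(F)))$ in place of Theorem~\ref{thm:aitchgamma}. Your observations about the caveat $\alpha C^{2^n}>M$ and about the low-complexity part being $[0,M]$ rather than $\{3\}$ are exactly the small adjustments needed, and they do not affect the conclusion since both concern only finitely many~$n$.
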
 

\begin{corollary} \label{cor:jess} 
For any $L$ that is not simply-connected, and any graph $K(\emptyset)$ 
with a free $G_L(\emptyset)$-action, there are continuously many 
quasi-isometry classes of groups $J(S)$.  
\end{corollary}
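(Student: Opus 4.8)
The plan is to combine the preceding corollary with a cardinality argument, exactly paralleling how Theorem~\ref{thm:main} was deduced from Proposition~\ref{symdiffprop}. First I would observe that since $L$ is finite, connected and not simply-connected, the constant $\beta C^{2^n}$ grows without bound, so there are infinitely many values of $n$ with $\alpha C^{2^n}>M$; call this cofinite set of ``large'' indices $I$. For each subset $E\subseteq I$ the group $J(S(E))$ is defined, and the previous corollary tells us that if $J(S(E))$ and $J(S(E'))$ are quasi-isometric then $E\mathbin{\triangle}E'$ is finite. Hence the quasi-isometry relation on $\{J(S(E)):E\subseteq I\}$ refines into equivalence classes each of which is a coset of the ideal of finite subsets of $I$ inside the Boolean group $(\mathcal{P}(I),\triangle)$; since $I$ is infinite, that quotient has cardinality $2^{\aleph_0}$, so there are continuously many quasi-isometry classes among the $J(S(E))$, and a fortiori among all the $J(S)$.

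The one genuine input that needs checking is that the hypotheses of the constructions in this section are actually met by $G_L(S)=G(S)$: namely that $S\mapsto G_L(S)$ is a functor from subsets of $\zz$ with inclusions to finitely generated groups with surjective homomorphisms, and that $G_L(\emptyset)$ admits a free cocompact simplicial action on some graph $K(\emptyset)$ in which points of a common orbit are at distance at least four. The functoriality and surjectivity are recorded in the Background section (the maps $G_L(S)\to G_L(T)$ for $S\subseteq T$), and $G_L(\emptyset)$ is finitely generated, hence acts freely cocompactly on a Cayley graph; passing to a sufficiently fine subdivision (barycentric subdivision iterated, or simply replacing each edge by a path of length four) arranges the distance-four condition without disturbing freeness or cocompactness. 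So such a $K(\emptyset)$ exists, and indeed the statement of the corollary already hypothesizes an arbitrary such $K(\emptyset)$.

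I expect no serious obstacle here: the real work was done in Theorem~\ref{thm:semiker} and the preceding corollary, and what remains is the bookkeeping of the cardinality argument together with the verification that the abstract setup applies. The only point requiring a word of care is that the bound $M=4N_1+4$ depends on the chosen action of $G(\emptyset)$ on $K(\emptyset)$, so one must fix that data first and then select the cofinite index set $I=\{n:\alpha C^{2^n}>M\}$ accordingly; since $M$ is a fixed finite constant once $K(\emptyset)$ is chosen, $I$ is genuinely cofinite and the argument goes through uniformly in the choice of $K(\emptyset)$.
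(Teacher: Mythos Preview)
Your proposal is correct and matches the paper's approach: the paper simply states that this corollary ``follow[s] easily by the same proofs as in the previous section,'' meaning exactly the cardinality argument you give, deducing continuously many classes from the finite-symmetric-difference corollary just as Theorem~\ref{thm:main} was deduced from Proposition~\ref{symdiffprop}. One minor simplification: since the preceding corollary already applies to arbitrary $F,F'\subseteq\nn$ (not just subsets of your cofinite set~$I$), you need not restrict to~$I$ at all---the cardinality argument works directly with $\mathcal{P}(\nn)$ modulo finite sets.
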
 

The reason why Corollary~\ref{cor:jess} is of value concerns the use of the 
Davis trick~\cite{davbook} to construct non-finitely presented 
Poincar\'e duality groups, as described in~\cite[Sec.~18]{ufp}.  
The starting point is a 2-complex $L$ for which each $G_L(S)$ 
is type $FP$; for this group there is a finite 2-complex that 
is an Eilenberg-Mac~Lane space $K(G_L(\emptyset),1)$.  For 
any $n\geq 4$, one can find a compact $n$-manifold $V$ with 
boundary that is also a $K(G_L(\emptyset),1)$.  Now let $K$ 
be the 1-skeleton of the barycentric subdivision of a 
triangulation of the boundary of $V$, and let $K(\emptyset)$ 
be the 1-skeleton of the induced triangulation of the boundary 
of the universal cover of $V$, with $G_L(\emptyset)$ acting 
via deck transformations.  For this choice of $K(\emptyset)$,
the group $J(\emptyset)$ contains a finite-index torsion-free 
subgroup $J'$ that is the fundamental group of a closed aspherical 
$n$-manifold $M$, and such that regular covering $M(S)$ of $M$ 
with fundamental group the kernel of $J'\rightarrow J(S)$ is acyclic 
for each $S\subseteq \zz$.  One deduces that each $J(S)$ contains a 
finite-index torsion-free subgroup that is a Poincar\'e 
duality group of dimension~$n$.  Since the inclusion of a 
finite-index subgroup is always a quasi-isometry, Corollary~\ref{cor:jess} 
implies Corollary~\ref{cor:manypdn} as stated in the introduction.  

It remains to prove Corollary~\ref{cor:manymani} from 
the introduction.  This follows from the above discussion 
by the Schwarz-Milnor Lemma~\cite[I.8.19]{bh}, which tells us that 
the acyclic covering manifold $M(S)$ of $M$ is quasi-isometric to
the group~$J(S)$.

\end{document}